\newcommand{\mc}[1]{{\mathcal{#1}}}
\newcommand{\mf}[1]{{\mathfrak{#1}}}
\newcommand{\bb}[1]{{\mathbb{#1}}}
\DeclareMathOperator{\RE}{Re}
\DeclareMathOperator{\IM}{Im}
\renewcommand{\Re}{\RE}
\renewcommand{\Im}{\IM}
\DeclareMathOperator{\dom}{dom}
\DeclareMathOperator{\ran}{ran}
\newlength{\maxlabwidth}
\numberwithin{equation}{section}
\theoremstyle{plain}
	\newtheorem{lemma}{Lemma}[section]
	\newtheorem{proposition}[lemma]{Proposition}
	\newtheorem{theorem}[lemma]{Theorem}
	\newtheorem{corollary}[lemma]{Corollary}
	\newtheorem{ntheoreM}[lemma]{}
\theoremstyle{definition}
	\newtheorem{definitioN}[lemma]{Definition}
\theoremstyle{remark}
	\newtheorem{remarK}[lemma]{Remark}
	\newtheorem{examplE}[lemma]{Example}
	\newtheorem{nremarK}[lemma]{}
\newcommand{\thlab}[1]{\thlabel{#1}\label{#1.}}
\renewcommand{\qedsymbol}{\raisebox{-2pt}{\large\ding{113}}}
\newcommand{\defendsymbol}{$\lozenge$}
\newcommand{\qedsymbolsave}{\qedsymbol}
\newenvironment{definition}{\begin{definitioN}}{
	\renewcommand{\qedsymbolsave}{\qedsymbol}\renewcommand{\qedsymbol}{\defendsymbol}
	\popQED{\qed}\renewcommand{\qedsymbol}{\qedsymbolsave}\end{definitioN}}
\newenvironment{remark}{\begin{remarK}}{
	\renewcommand{\qedsymbolsave}{\qedsymbol}\renewcommand{\qedsymbol}{\defendsymbol}
	\popQED{\qed}\renewcommand{\qedsymbol}{\qedsymbolsave}\end{remarK}}
\newenvironment{example}{\begin{examplE}}{
	\renewcommand{\qedsymbolsave}{\qedsymbol}\renewcommand{\qedsymbol}{\defendsymbol}
	\popQED{\qed}\renewcommand{\qedsymbol}{\qedsymbolsave}\end{examplE}}
\newcommand{\bibi}[5]{\bibitem[#5]{#1} \textsc{#2}:\ \textit{#3,}\ {#4.} }
\begin{document}
\begin{flushleft}
	{\Large\bf Spectral Theorem for definitizable normal linear operators on Krein spaces}
	\\[5mm]
	\textsc{Michael Kaltenb\"ack}
	\\[6mm]
	{\small
	\textbf{Abstract:In the present note a spectral theorem for normal 
		definitizable linear operators on Krein spaces is derived by developing
		a functional calculus $\phi \mapsto \phi(N)$ which is the 
		proper analogue of $\phi \mapsto \int \phi \, dE$ in the Hilbert space situation.}
	}
\end{flushleft}

\begin{flushleft}
   {\small
   {\bf Mathematics Subject Classification (2010):} 47A60, 47B50, 47B15. 
   }
\end{flushleft}
\begin{flushleft}
   {\small
   {\bf Keywords:} Krein space, definitizable operators, normal operators, spectral theorem
   }
\end{flushleft}

\section{Introduction}

A bounded linear operator $N$ on a Krein space $(\mc K,[.,.])$ is called normal, if
$N$ commutes with its Krein space adjoint $N^*$, i.e.\ $NN^*=N^*N$. This
is equivalent to the fact that its real part $A:=\frac{N+N^*}{2}$ and its 
imaginary part $B:=\frac{N-N^*}{2i}$ commute. 
We call $N$ definitizable
whenever the selfadjoint operators $A$ and $B$ are both definitizable in classical sense,
i.e.\ there exist so-called definitizing polynomials $p(z)$ and $q(z)$ such that 
$[p(A)x,x] \geq 0$ and $[q(B)x,x] \geq 0$ for all $x\in \mc K$; see \cite{langer1982}. 

In the Hilbert space setting the spectral theorem for bounded linear, normal operators 
is a well-known functional analysis result. In fact, it is almost as 
as folklore as the older spectral theorem for bounded linear, selfadjoint operators.

In the Krein space world there exists no similar result for general selfadjoint operators.
But assuming in addition definitizability a spectral theorem could be shown by Heinz Langer;
cf.\ \cite{langer1982}. This theorem became an important starting point for various spectral 
results. The main difference to selfadjoint operators on Hilbert spaces is the appearance  
of (finitely many) critical points, where the spectral projections no longer behave like a measure. 

Only a rather small number of publications dealt with the situation of a normal 
(definitizable) operators in a Krein space.
The Pontryagin space case was studied up to a certain extent for example in 
\cite{xiaochao1985} and \cite{langerszafraniec2006}. Special normal operators on Krein spaces
were considered for example in \cite{azizovstrauss2003} and \cite{philippstrausstrunk2013}.
But until now no adequate version of a spectral theorem on normal definitizable operators in Krein spaces
has been found.

In the present paper we present a spectral theorem for bounded linear, normal, 
definitizable operators formulated in terms of a functional calculus generalizing 
the functional calculus $\phi\mapsto \int \phi \, dE$ in the Hilbert space case.
In order to achieve this goal, we use the methods developed in \cite{KaPr2014} 
for definitizable selfadjoint operators and extend them for two commuting
definitizable selfadjoint operators. 

Let us anticipate a little more explicitly what happens in this note. Denoting by 
$p(z)$ and $q(z)$ the definitizing real polynomials for $A$ and $B$, respectively, 
we build a Hilbert space $\mc V$ which is continuously and densely embedded in the given Krein space 
$\mc K$ such that $TT^* = p(A)+q(B)$, where $T: \mc V \to \mc K$ denotes that adjoint of the embedding
mapping. Then we use the $*$-homomorphism $\Theta: (TT^*)' \ (\subseteq B(\mc K)) \to (T^*T)' \ (\subseteq B(\mc V))$,
$C \mapsto (T\times T)^{-1}(C)$, studied in \cite{KaPr2014}, in order to drag our normal
operator $N\in (TT^*)'\subseteq B(\mc K)$ into $(T^*T)' \ (\subseteq B(\mc V)$. The resulting
normal operator $\Theta(N)$ acts in a Hilbert space, and therefore has a spectral measure
$E(\Delta)$, where $\Delta$ are Borel subsets of $\bb C$.

The proper family $\mc F_N$ of functions suitable for the aimed functional 
calculus are bounded and measurable functions on
\[
\big(\sigma(\Theta(N)) \cup (Z^{\bb R}_p + i Z^{\bb R}_q)\big)\dot\cup Z^i \ 
(\subseteq \bb C \dot\cup \bb C^2) \,.
\]
Here $Z^{\bb R}_p = p^{-1}\{0\}\cap \bb R$ and $Z^{\bb R}_q = q^{-1}\{0\}\cap \bb R$ denote the real 
zeros of $p(z)$ and $q(z)$, respectively, and $Z^i = (p^{-1}\{0\}\times q^{-1}\{0\}) \setminus (\bb R\times\bb R)$.
Moreover, the functions $\phi\in \mc F_N$ assume values in $\bb C$ on 
$\sigma(\Theta(N)) \setminus (Z^{\bb R}_p + i Z^{\bb R}_q)$, values in 
$\bb C^{\mf d_p(\Re z)\cdot \mf d_q(\Im z) + 2}$ at $z\in Z^{\bb R}_p + i Z^{\bb R}_q$ and 
values in $\bb C^{\mf d_p(\xi)\cdot \mf d_q(\eta)}$ at $z=(\xi,\eta)\in Z^i$.
Here $\mf d_p(w)$ ($\mf d_q(w)$) denotes $p$'s ($q$'s) degree of zero at $w$.
Finally, $\phi\in \mc F_N$ satisfies a growth regularity condition at all
points from $Z^{\bb R}_p + i Z^{\bb R}_q$ which are not isolated in
$\sigma(\Theta(N)) \cup (Z^{\bb R}_p + i Z^{\bb R}_q)$.

Any polynomial $s(z,w)\in \bb C[z,w]$ can be seen as a function $s_N\in \mc F_N$. 
The nice thing about these, somewhat tediously defined functions $\phi\in \mc F_N$ is that
\begin{equation}\label{decompact}
    \phi(z) = s_N(z) + 
	(p_N + q_N) (z)\cdot g(z), \ z \in \sigma(\Theta(N)) \,,
\end{equation}
where $s \in \bb C[z,w]$ is a suitable polynomial in two variables and 
$g: \sigma(\Theta(N))\setminus (Z^{\bb R}_p + i Z^{\bb R}_q) \to \bb C$
is bounded and measurable and $g: \sigma(\Theta(N))\cap (Z^{\bb R}_p + i Z^{\bb R}_q) \to \bb C^2$.

We then define $\phi(N):=s(A,B) + T \int_{\sigma(\Theta(N))}^{R_1,R_2} g \, dE T^*$, show that
this operator does not depend on the actual decomposition \eqref{decompact} and 
that $\phi \mapsto \phi(N)$ is indeed a $*$-homomorphism. Here 
$\int_{\sigma(\Theta(N))}^{R_1,R_2} g \, dE$ is the integral of $g$ with respect 
to the spectral measure $E$ taking into account the fact that $g$ has values in 
$\bb C^2$ on $\sigma(\Theta(N))\cap (Z^{\bb R}_p + i Z^{\bb R}_q)$.

If $\phi$ is stems from a characteristic function corresponding to a Borel subset $\Delta$ 
of $\bb C$ such that no point of $Z^{\bb R}_p + i Z^{\bb R}_q$ belongs to the boundary
of $\Delta$, then $\phi(N)$ is a selfadjoint projection on $\mc K$. In fact, it can be seen as
the corresponding special projection for $N$.

\section{Multiple embeddings}

For the present section we fix a Krein space $(\mc K,[.,.])$ and Hilbert spaces
$(\mc V,[.,.])$, $(\mc V_1,[.,.])$ and $(\mc V_2,[.,.])$. Moreover, let 
$T_1: \mc V_1 \to \mc K$, $T_2: \mc V_2 \to \mc K$
and $T: \mc V \to \mc K$ be bounded linear, injective mappings such that
\[
    TT^* = T_1T_1^* + T_2T_2^* 
\]
holds true. Since for $x\in \mc K$ we have
\begin{multline*}
	[T^*x,T^*x]_{\mc V} = [TT^*x,x] = \\ [T_1T_1^*x,x] + [T_2T_2^*x,x] = 
		  [T_1^*x,T_1^*x]_{\mc V_1} + [T_2^*x,T_2^*x]_{\mc V_2} \,,
\end{multline*}
one easily concludes that $T^* x \mapsto T_j^*x$ constitutes a well-defined, contractive linear mapping from 
$\ran T^*$ onto $\ran T_j^*$ for $j=1,2$. By $(\ran T^*)^\bot = \ker T=\{0\}$ and $(\ran T_j^*)^\bot = \ker T_j=\{0\}$ 
these ranges are dense in the Hilbert spaces $\mc V$ and $\mc V_j$. Hence, 
there is a unique bounded linear continuation of $T^* x \mapsto T_j^*x$ to $\mc V$, 
which has dense range in $\mc V_j$.

Denoting by $R_j$ for $j=1,2$ the adjoint mapping of this continuation we clearly have $T_j = T R_j$
and $\ker R_j = (\ran R_j^*)^\bot = \{0\}$. From $TT^* = T_1T_1^* + T_2T_2^*$ we conclude
\[
      T( \ I \ )T^* = TT^* = T R_1 R_1^*T^* + T R_2 R_2^*T^* = T( \ R_1 R_1^* + R_2 R_2^* \ )T^* \,.
\]
$\ker T=\{0\}$ and the density of $\ran T^*$ yields $R_1 R_1^* + R_2 R_2^* = I$.

If $T_1T_1^*$ and $T_2T_2^*$ commute, then by $TT^* = T_1T_1^* + T_2T_2^*$ also $T_jT_j^*$ and $TT^*$ commute.
Moreover, in this case
\[
    T ( \ T^* T R_j R_j^* \ )T^* = TT^* T_jT_j^* = T_jT_j^* TT^* = T ( \ R_j R_j^* T^* T \ )T^* \,.
\]
Employing again $T$'s injectivity and the density of $\ran T^*$ we see that $R_j R_j^*$ and $T^* T$ commute for 
$j=1,2$. From this we get
\[
    T_j^* T_j R_j^*R_j = R_j^* (T^* T R_j R_j^*) R_j = R_j^* (R_j R_j^* T^* T) R_j = R_j^*R_j T_j^* T_j \,.
\]
Thus, we showed

\begin{lemma}\thlab{existtreans}
    With the above notations and assumptions there exist injective contractions $R_1: \mc V_1 \to \mc V$
    and $R_2: \mc V_2 \to \mc V$ such that $T_1 = T R_1$, $T_2 = T R_2$ and
    $R_1 R_1^* + R_2 R_2^* = I$.
    
    If $T_1T_1^*$ and $T_2T_2^*$ commute, then the operators $R_j R_j^*$ and $T^* T$ on $\mc V$ commute
    as well as the operators $R_j^*R_j$ and $T_j^* T_j$ on $\mc V_j$ for $j=1,2$.
\end{lemma}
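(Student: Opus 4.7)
The overall plan is to produce $R_j$ as the Hilbert space adjoint of a contraction constructed between canonical dense subspaces of $\mc V$ and $\mc V_j$. Polarising the identity $TT^{*}=T_1T_1^{*}+T_2T_2^{*}$ against a vector $x\in\mc K$ yields the Pythagorean relation
\[
    \|T^{*}x\|_{\mc V}^{2} = \|T_1^{*}x\|_{\mc V_1}^{2} + \|T_2^{*}x\|_{\mc V_2}^{2},
\]
so $T^{*}x=0$ forces $T_j^{*}x=0$, and the rule $T^{*}x\mapsto T_j^{*}x$ defines a norm-decreasing linear map $S_j\colon\ran T^{*}\to\ran T_j^{*}$. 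Because $T$ and $T_j$ are injective, these ranges are dense in $\mc V$ and $\mc V_j$ respectively, so $S_j$ extends to a contraction $\widetilde S_j\colon\mc V\to\mc V_j$ whose range is again dense. I would then set $R_j:=\widetilde S_j^{*}\colon\mc V_j\to\mc V$. This is automatically a contraction, and injectivity follows from $\ker R_j=(\ran\widetilde S_j)^{\bot}=\{0\}$.

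Next I would verify the factorisation $T_j=TR_j$ by pairing: for $y\in\mc V_j$ and $x\in\mc K$,
\[
    [TR_jy,x] = [R_jy,T^{*}x]_{\mc V} = [y,\widetilde S_j T^{*}x]_{\mc V_j} = [y,T_j^{*}x]_{\mc V_j} = [T_jy,x].
\]
Substituting $T_j=TR_j$ into the hypothesis rewrites it as $TT^{*}=T(R_1R_1^{*}+R_2R_2^{*})T^{*}$, and cancelling $T$ on the left and $T^{*}$ on the right --- legitimate because $T$ is injective and $\ran T^{*}$ is dense in $\mc V$ --- delivers $R_1R_1^{*}+R_2R_2^{*}=I$.

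For the commutativity assertions, if $T_1T_1^{*}$ and $T_2T_2^{*}$ commute then each commutes with their sum $TT^{*}$; rewriting $TT^{*}T_jT_j^{*}=T_jT_j^{*}TT^{*}$ with $T_j=TR_j$ gives
\[
    T(T^{*}TR_jR_j^{*})T^{*} = T(R_jR_j^{*}T^{*}T)T^{*},
\]
and the same cancellation trick yields $[R_jR_j^{*},T^{*}T]=0$. Sandwiching this identity by $R_j^{*}$ on the left and $R_j$ on the right, and using $T_j^{*}T_j=R_j^{*}T^{*}TR_j$, one obtains $T_j^{*}T_j\,R_j^{*}R_j = R_j^{*}R_j\,T_j^{*}T_j$. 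The only genuinely substantive point in the argument is the repeated cancellation step: it is precisely the combination of $\ker T=\{0\}$ with $\overline{\ran T^{*}}=\mc V$ that permits an operator identity of the form $TXT^{*}=TYT^{*}$ to be stripped to $X=Y$. Everything else is bookkeeping.
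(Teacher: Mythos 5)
Your argument is correct and follows essentially the same route as the paper: the Pythagorean identity gives the contraction $T^{*}x\mapsto T_j^{*}x$, whose extension's adjoint is $R_j$, and all remaining identities are obtained by the cancellation principle that $TXT^{*}=TYT^{*}$ with $\ker T=\{0\}$ and $\overline{\ran T^{*}}=\mc V$ forces $X=Y$, including the final sandwiching by $R_j^{*}$ and $R_j$ for the second commutation relation. The only difference is cosmetic: you verify $T_j=TR_j$ explicitly by pairing, which the paper treats as immediate.
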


\begin{center}
\tikzset{
    electron/.style={draw=black, dashed, postaction={decorate},
        decoration={markings,mark=at position 0 with {\arrow[draw=black]{<}}}},
}
\tikzstyle{level 1}=[level distance=3.2cm, sibling distance=3.2cm]
\tikzstyle{level 2}=[level distance=3.2cm, sibling distance=1.8cm]
\tikzstyle{bag} = [text width=2em, text centered]

\begin{tikzpicture}[grow=left, sloped]

\node[bag] (a) {$\mc V$}
    child {
        node[bag] (c1) {$\mc V_1$}        
        edge from parent [electron]
            edge from parent 
            node[below] {$R_1$}
    }
    child {
        node[bag] (c2) {$\mc V_2$}        
        edge from parent [electron]
            edge from parent 
            node[below] {$R_2$}
    };
\draw[->,thin] (a) -- (2,0) node[right,scale=1.0] (b) {$\mc K$};     
\path[thin] (a) edge node[above] {$T$} (2,0);
\path (c1) edge [bend left=15,below,thin,->]  node[scale=1.0,above] {$T_1$} (b);
\path (c2) edge [bend right=15,below,thin,->]  node[scale=1.0,below] {$T_2$} (b);
\end{tikzpicture}
\end{center}

By $\Theta_j: (T_jT_j^*)' \ (\subseteq B(\mc K)) \to (T_j^*T_j)' \ (\subseteq B(\mc V_j)), \ j=1,2$, and
by $\Theta: (TT^*)' \ (\subseteq B(\mc K)) \to (T^*T)' \ (\subseteq B(\mc V))$ we shall denote the
$*$-algebra homomorphisms mapping the identity operator to the identity operator 
as in \thref{thetadefeig} from \cite{KaPr2014} corresponding to the mappings $T_j, \ j=1,2$, and $T$:
\[
    \Theta_j(C_j) = (T_j\times T_j)^{-1}(C_j) = T_j^{-1}C_jT_j, \ C_j\in (T_jT_j^*)' \,,
\]    
\begin{equation}\label{thetaVdef}
    \Theta(C) = (T\times T)^{-1}(C) = T^{-1}CT, \ C\in (TT^*)' \,.
\end{equation}
We can apply \thref{thetadefeig} in \cite{KaPr2014} also to the bounded linear, injective  
$R_j: \mc V_j \to \mc V, \ j=1,2$, and denote the corresponding $*$-algebra homomorphisms
by $\Gamma_j : (R_jR_j^*)' \ (\subseteq B(\mc V)) \to (R_j^*R_j)' \ (\subseteq B(\mc V_j))$:
\[
    \Gamma_j(D) = (R_j\times R_j)^{-1}(D) = R_j^{-1}DR_j, \ D \in (R_jR_j^*)' \,.
\]

\begin{proposition}\thlab{comreg}
    With the above notations and assumptions we have $(T_1T_1^*)' \cap (T_2T_2^*)' \subseteq (TT^*)'$ 
    and $\Theta((T_1T_1^*)' \cap (T_2T_2^*)') \subseteq (R_1R_1^*)' \cap (R_2R_2^*)' \cap (T^*T)'$,
    where in fact ($j=1,2$)
\begin{equation}\label{zuef}
  \Theta(C) R_j R_j^* = R_j \Theta_j(C) R_j^*=R_j R_j^* \Theta(C), \ \ C\in (T_1T_1^*)' \cap (T_2T_2^*)'  \,.
\end{equation}
    Moreover,
\begin{equation}\label{zuefvor}
  \Theta_j(C) = \Gamma_j \circ \Theta(C), \ \ C\in (T_1T_1^*)' \cap (T_2T_2^*)'  \,.
\end{equation}
\end{proposition}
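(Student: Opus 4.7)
The first inclusion $(T_1T_1^*)' \cap (T_2T_2^*)' \subseteq (TT^*)'$ is immediate from $TT^* = T_1T_1^* + T_2T_2^*$, since commutation with the summands entails commutation with the sum. For the rest, my plan is to fix $C \in (T_1T_1^*)' \cap (T_2T_2^*)'$, write $D := \Theta(C)$ and $D_j := \Theta_j(C)$, and work exclusively with the intertwining identities built into the definitions of $\Theta,\Theta_j$: namely $TD = CT$, $DT^* = T^*C$, $T_j D_j = CT_j$, $D_j T_j^* = T_j^*C$ (the second identity in each pair follows from the first by applying it to $C^*$ and taking adjoints, using that $\Theta,\Theta_j$ are $*$-homomorphisms). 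The relation $D \in (T^*T)'$ is built into the codomain of $\Theta$ and therefore costs nothing.

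The main workhorse will be the following \emph{cancellation principle}: since $T$ is injective with $\ran T^*$ dense in $\mc V$, the map $X \mapsto TXT^*$ on $B(\mc V)$ is injective; likewise for $T_j$ on $B(\mc V_j)$. To prove \eqref{zuef} I will sandwich each of the three candidate operators between $T$ on the left and $T^*$ on the right and show that all three collapse to $T_jT_j^*C$:
\begin{align*}
T\,\bigl(DR_jR_j^*\bigr)\,T^* &= C\,TR_jR_j^*T^* = C\,T_jT_j^* = T_jT_j^*\,C,\\
T\,\bigl(R_jR_j^*D\bigr)\,T^* &= TR_jR_j^*\,T^*C = T_jT_j^*\,C,\\
T\,\bigl(R_jD_jR_j^*\bigr)\,T^* &= T_j D_j T_j^* = C\,T_jT_j^* = T_jT_j^*\,C,
\end{align*}
where I repeatedly use $T_j = TR_j$, the intertwining relations $TD=CT$, $DT^*=T^*C$, $T_jD_j = CT_j$, and the hypothesis $C \in (T_jT_j^*)'$. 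Cancellation then yields \eqref{zuef}. As a by-product $D \in (R_jR_j^*)'$, so that $\Gamma_j(D)$ occurring in \eqref{zuefvor} is meaningful.

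For \eqref{zuefvor} a shorter computation suffices:
\[
T R_j D_j = T_j D_j = C T_j = C T R_j = T D R_j,
\]
and injectivity of $T$ alone gives $R_j D_j = D R_j$, which is precisely the intertwining relation characterizing $\Gamma_j(D)$. Hence $D_j = \Gamma_j(D)$, i.e., $\Theta_j(C) = \Gamma_j(\Theta(C))$.

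I do not anticipate any serious obstacle: the proposition is essentially a bookkeeping exercise with the defining intertwining identities combined with injectivity of $T$ and density of $\ran T^*$. The one point demanding attention is the logical order: \eqref{zuef} must be proved \emph{before} \eqref{zuefvor}, since otherwise one cannot be sure that $\Theta(C)$ lies in the domain $(R_jR_j^*)'$ of $\Gamma_j$.
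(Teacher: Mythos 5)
Your proof is correct and follows essentially the same route as the paper: both rest on the intertwining relations from the cited theorem, sandwich the relevant operators between $T$ and $T^*$, and cancel using injectivity of $T$ together with density of $\ran T^*$, while the identity $\Gamma_j\circ\Theta(C)=\Theta_j(C)$ comes down in both cases to $R_j^{-1}T^{-1}CTR_j=T_j^{-1}CT_j$. The only (cosmetic) difference is that you verify all three expressions in \eqref{zuef} by direct sandwiching, whereas the paper proves one equality this way and obtains the other by applying it to $C^*$ and taking adjoints.
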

\begin{proof}
    $(T_1T_1^*)' \cap (T_2T_2^*)' \subseteq (TT^*)'$ is clear from $TT^* = T_1T_1^* + T_2T_2^*$.
    According to \thref{thetadefeig} in \cite{KaPr2014} we have 
    $\Theta_j(C) T_j^* = T_j^* C$ and $T^* C = \Theta(C) T^*$ for $C\in (T_1T_1^*)' \cap (T_2T_2^*)'$.
    Therefore,
\begin{multline*}
    T (\ R_j \Theta_j(C) R_j^* \ ) T^* =  T_j \Theta_j(C) T_j^* = T_j T_j^* C = \\
    T R_j R_j^* T^* C = T ( \ R_j R_j^* \Theta(C) \ ) T^* \,. 
\end{multline*}
$\ker T=\{0\}$ and the density of $\ran T^*$ yield $R_j \Theta_j(C) R_j^*=R_j R_j^* \Theta(C)$
for $j=1,2$. Applying this equation to $C^*$ and taking adjoints yields 
$R_j \Theta_j(C) R_j^*=\Theta(C) R_j R_j^*$. In particular, $\Theta(C) \in (R_jR_j^*)'$.
Therefore, we can apply $\Gamma_j$ to $\Theta(C)$ and get
\[
  \Gamma_j \circ \Theta(C) = R_j^{-1} T^{-1}C T R_j = T_j^{-1} C T_j = \Theta_j(C) \,.
\]
\end{proof}

For the following assertion note that by \eqref{zuefvor} and by the fact that $\Gamma_j$
is a $*$-algebra homomorphism mapping the identity operator to the identity operator,
we have ($j=1,2$)
\begin{equation}\label{4ucd75}
  \sigma(\Theta(C)) \subseteq \sigma(\Theta_j(C)), \ C\in (T_1T_1^*)' \cap (T_2T_2^*)' \,.
\end{equation}

\begin{corollary}\thlab{normtransf}
     With the above notations and assumptions let $N\in (T_1T_1^*)' \cap (T_2T_2^*)'$ be normal.
     Then $\Theta(N),\Theta_1(N), \Theta_2(N)$ are all normal operators in the Hilbert spaces
     $\mc V$, $\mc V_1$, $\mc V_2$, respectively. If $E$ ($E_1$,$E_2$) denotes the spectral
     measure for $\Theta(N)$ ($\Theta_1(N)$, $\Theta_2(N)$), then $E(\Delta) \in (R_1R_1^*)' \cap (R_2R_2^*)' \cap (T^*T)'$ and
     \[
	\Gamma_j(E(\Delta)) = E_j(\Delta), \ \ j=1,2 \,,
     \]
     for all Borel subsets $\Delta$ of $\bb C$, where $E_j(\Delta) \in (R_j^*R_j)' \cap (T_j^*T_j)'$. 
     
     Moreover, $\int h \, dE \in (R_1R_1^*)' \cap (R_2R_2^*)' \cap (T^*T)'$ 
     and 
     \[
	\Gamma_j\left(\int h \, dE \right) = \int h \, dE_j 
     \]
     for any bounded and measurable $h: \sigma(\Theta(N)) \to \bb C$, where $\int h \, dE_j$
     belongs to $(R_j^*R_j)' \cap (T_j^*T_j)'$.
\end{corollary}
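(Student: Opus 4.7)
My plan is to lift the identity between $\Gamma_j$ and $\Theta$ from the polynomial level, as given in Proposition \ref{comreg}, successively to the continuous and bounded Borel functional calculi of $\Theta(N)$ and $\Theta_j(N)$. Normality of $\Theta(N), \Theta_1(N), \Theta_2(N)$ is immediate: $(T_1T_1^*)' \cap (T_2T_2^*)'$ is a $*$-subalgebra containing $N$ and $N^*$, and $\Theta, \Theta_1, \Theta_2$ are $*$-homomorphisms, so the three images commute with their adjoints and hence admit Hilbert space spectral measures $E, E_1, E_2$. For every polynomial $s\in \bb C[z,w]$ the operator $s(N,N^*)$ again lies in $(T_1T_1^*)' \cap (T_2T_2^*)'$, so \eqref{zuefvor} combined with the $*$-homomorphism property of $\Theta$ and $\Theta_j$ gives
\[
\Gamma_j\bigl(s(\Theta(N),\Theta(N)^*)\bigr) = s\bigl(\Theta_j(N),\Theta_j(N)^*\bigr).
\]
Since $\sigma(\Theta(N)) \subseteq \sigma(\Theta_j(N))$ by \eqref{4ucd75}, Stone--Weierstrass on $\sigma(\Theta_j(N))$ together with contractivity of the $*$-homomorphism $\Gamma_j$ extends this identity in operator norm to $\Gamma_j(f(\Theta(N))) = f(\Theta_j(N))$ for every $f\in C(\sigma(\Theta_j(N)))$.

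The main technical step is the extension to bounded Borel $h$. For this I would first verify that $\Gamma_j$ is continuous in the strong operator topology on norm-bounded subsets of its domain. This rests on the intertwining identity $\Gamma_j(D) R_j^* = R_j^* D$ inherited from \thref{thetadefeig}, combined with contractivity of $\Gamma_j$ and density of $\ran R_j^*$ in $\mc V_j$; a standard $\varepsilon/3$ argument then turns strong convergence of $D_n$ into strong convergence of $\Gamma_j(D_n)$. Armed with this, the class of bounded Borel $h$ on a compact $K \supseteq \sigma(\Theta_j(N))$ for which $\Gamma_j(\int h\, dE) = \int h\, dE_j$ is closed under bounded pointwise limits (bounded convergence for spectral integrals yields strong convergence on both sides, preserved by $\Gamma_j$), contains $C(K)$ by the previous step, and hence by the functional monotone class theorem contains every bounded Borel function. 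Specialising $h = \chi_\Delta$ yields $\Gamma_j(E(\Delta)) = E_j(\Delta)$; taking further $\Delta = \sigma(\Theta(N))$ gives $E_j(\sigma(\Theta(N))) = \Gamma_j(I) = I$, so $E_j$ is concentrated on $\sigma(\Theta(N))$ and the integral $\int h\, dE_j$ is unambiguous even when $h$ is only defined on $\sigma(\Theta(N))$.

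The commutant memberships are then harvested from a bicommutant argument. By \eqref{zuef} both $R_1R_1^*$ and $R_2R_2^*$ commute with $\Theta(N)$, and taking adjoints also with $\Theta(N)^*$; $T^*T$ does the same because $\Theta$ ranges inside $(T^*T)'$ and $T^*T$ is selfadjoint. Since $E(\Delta)$ and $\int h\, dE$ lie in $\{\Theta(N),\Theta(N)^*\}''$, this yields $E(\Delta),\int h\,dE \in (R_1R_1^*)' \cap (R_2R_2^*)' \cap (T^*T)'$. The memberships $E_j(\Delta), \int h\, dE_j \in (R_j^*R_j)' \cap (T_j^*T_j)'$ follow from the codomain of $\Gamma_j$ together with the analogous bicommutant argument applied to $\Theta_j(N)\in (T_j^*T_j)'$. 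The principal obstacle I expect is the strong-operator continuity of $\Gamma_j$ needed to promote the continuous functional calculus identity to the Borel one; once this is in hand, the rest of the argument is standard.
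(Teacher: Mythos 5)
Your proposal is correct, and it starts from the same two ingredients as the paper's proof --- the polynomial identity $\Gamma_j\bigl(s(\Theta(N),\Theta(N)^*)\bigr)=s(\Theta_j(N),\Theta_j(N)^*)$ obtained from \eqref{zuefvor} and the intertwining relation $\Gamma_j(D)R_j^*=R_j^*D$ --- but the bridge from polynomials to bounded Borel functions is built differently. The paper never leaves the scalar level: for fixed $x\in\mc V$, $y\in\mc V_j$ it compares the complex measures $\Delta\mapsto[\Gamma_j(E(\Delta))R_j^*x,y]$ and $\Delta\mapsto[E_j(\Delta)R_j^*x,y]$, checks that their integrals against all trigonometric polynomials coincide, and concludes $\Gamma_j(E(\Delta))=E_j(\Delta)$ from the uniqueness part of the Riesz Representation Theorem together with the density of $\ran R_j^*$; the formula for a general bounded measurable $h$ is then a short direct computation using $E_j(\Delta)R_j^*=R_j^*E(\Delta)$, so no operator-topology continuity of $\Gamma_j$ is ever needed. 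You instead pass through the continuous functional calculus (Stone--Weierstrass plus contractivity of $\Gamma_j$), prove that $\Gamma_j$ is strongly continuous on norm-bounded sets (your $\varepsilon/3$ argument with $\Gamma_j(D)R_j^*=R_j^*D$ and dense $\ran R_j^*$ is sound, and $\int h\,dE$ does lie in $\dom\Gamma_j=(R_jR_j^*)'$ by your bicommutant remark), and then capture all bounded Borel $h$ in one sweep by a monotone-class/bounded-convergence argument, recovering $\Gamma_j(E(\Delta))=E_j(\Delta)$ as the special case $h=\chi_\Delta$; your observation that $E_j(\sigma(\Theta(N)))=\Gamma_j(I)=I$ is a clean way to make $\int h\,dE_j$ unambiguous. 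So your route costs exactly the extra lemma you identify as the main obstacle --- strong-operator continuity of $\Gamma_j$ --- which the paper's weak, measure-theoretic argument avoids, but in exchange it delivers the full Borel functional-calculus identity at once rather than treating spectral projections and integrals in two separate steps. The commutant memberships are harvested in essentially the same way in both proofs.

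One small repair: you quote \eqref{4ucd75} as $\sigma(\Theta(N))\subseteq\sigma(\Theta_j(N))$, but since $\Theta_j(N)=\Gamma_j(\Theta(N))$ with $\Gamma_j$ unital and with inverse-closed domain $(R_jR_j^*)'$, invertibility of $\Theta(N)-\lambda$ passes to $\Theta_j(N)-\lambda$, so the inclusion this argument actually yields is $\sigma(\Theta_j(N))\subseteq\sigma(\Theta(N))$ (which is also the direction the paper uses when restricting $h$). For your Stone--Weierstrass step simply carry out the uniform approximation on a compact set containing both spectra; this is bookkeeping, not a gap.
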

\begin{proof}
    The normality of $\Theta(N),\Theta_1(N)$ and $\Theta_2(N)$ is clear, since $\Theta$, $\Theta_1, \Theta_2$ are
    $*$-homomorphisms. From \thref{comreg} we know that $\Theta(N) \in (R_1R_1^*)' \cap (R_2R_2^*)' \cap (T^*T)'$.
    According to the well known properties of $\Theta(N)$'s spectral measure we obtain
    $E(\Delta) \in (R_1R_1^*)' \cap (R_2R_2^*)' \cap (T^*T)'$ and, in turn, 
    $\int h \, dE \in (R_1R_1^*)' \cap (R_2R_2^*)' \cap (T^*T)'$. In particular, $\Gamma_j$ can be applied 
    to $E(\Delta)$ and $\int h \, dE$.
    
    Similarly, $\Theta_j(N)\in (T_j^*T_j)'$ implies $E_j(\Delta), \int h \, dE_j \in (T_j^*T_j)'$ 
    for a bounded and measurable $h$.
    
    Recall from \thref{thetadefeig} in \cite{KaPr2014} that $\Gamma_j(D) R_j^*x = R_j^* D$
    for $D \in (T^*T)'$. For $x\in \mc V$ and $y\in \mc V_j$ we therefore get
    \[
	[\Gamma_j(E(\Delta)) R_j^*x, y] = [R_j^* E(\Delta) x,y] = [E(\Delta) x, R_j y]
    \]
    and, in turn,
    \begin{multline*}
	\int_{\bb C} s(z,\bar z) \, d[\Gamma_j(E) R_j^*x, y] = 
	\int_{\bb C} s(z,\bar z) \, d[E x, R_j y] = [s(\Theta(N),\Theta(N)^*)x, R_j y] = \\
	[R_j ^* s(\Theta(N),\Theta(N)^*)x, y] = [\Gamma_j(s(\Theta(N),\Theta(N)^*)) R_j^* x,y] 
    \end{multline*}
    for any trigonometric polynomial $s(z,\bar z) \in \bb C[z,\bar z]$.
    By \eqref{zuefvor} and the fact that $\Gamma_j$ is a $*$-homomorphism 
    we have $\Gamma_j(s(\Theta(N),\Theta(N)^*)) = s(\Theta_j(N),\Theta_j(N)^*)$.
    Consequently,
    \[
	\int_{\bb C} s(z,\bar z) \, d[\Gamma_j(E) R_j^*x, y] = \int_{\bb C} s(z,\bar z) \, d[E_j R_j^*x, y] \,.
    \]
    Since $E(\bb C\setminus K)=0$ and $E_j(\bb C\setminus K)=0$ for a certain compact $K\subseteq \bb C$ and since 
    $\bb C[z,\bar z]$ is densely contained in $C(K)$, we obtain from the uniqueness assertion of 
    the Riesz Representation Theorem 
    \[
	[\Gamma_j(E(\Delta)) R_j^*x, y] = [E_j(\Delta) R_j^*x, y], \ x\in \mc V, \, y\in \mc V_j \,,
    \]
    for all Borel subsets $\Delta$ of $\bb C$.
    Due to the density of $\ran R_j^*$ in $\mc V_j$ we even have 
    $[\Gamma_j(E(\Delta)) z, y] = [E_j(\Delta) z, y], \ y,z \in \mc V_j$ and, in turn,
    $\Gamma_j(E(\Delta))=E_j(\Delta)$. Since $\Gamma_j$ maps into $(R_j^*R_j)'$ we have
    $E_j(\Delta)\in (R_j^*R_j)'$ and, in turn, $\int h \, dE_j\in (R_j^*R_j)'$ for any bounded and measurable $h$.
    
    If $h: \sigma(\Theta(N)) \to \bb C$ is bounded and measurable, then, clearly, also
    its restriction to $\sigma(\Theta_j(N)) = \sigma(\Gamma_j\circ\Theta(N))$ is bounded and measurable;
    see \eqref{4ucd75}. 
    Due to $E_j(\Delta) R_j^*= \Gamma_j(E(\Delta)) R_j^* = R_j^*E(\Delta)$ for 
    $x\in\mc V$ and $y\in\mc V_j$ we have
    \begin{multline*}
	[\Gamma_j\left(\int h \, dE \right) R_j^*x,y] = 
	[R_j^* \left(\int h \, dE \right) x,y] = 
	[\left(\int h \, dE \right) x, R_j y] = \\
	\int h \, d[E x,R_j y] = \int h \, d[E_j R_j^* x, y] =
	[\left(\int h \, dE_j\right) R_j^*x,y] \,. 
    \end{multline*}
    Again the density of $\ran R_j^*$ yields $\Gamma_j\left(\int h \, dE \right) = \int h \, dE_j$.
\end{proof}

Recall from \thref{Xidefeig} in \cite{KaPr2014} the mappings ($j=1,2$)
\begin{equation}\label{xijdef}
    \Xi_j : (T_j^*T_j)' \ (\subseteq B(\mc V_j)) \to (T_jT_j^*)' \ (\subseteq B(\mc K)), \ 
		\Xi_j(D_j) = T_j D_j T_j^{*} \,,
\end{equation}
and $\Xi : (T^*T)' \ (\subseteq B(\mc V)) \to (TT^*)' \ (\subseteq B(\mc K)), \ \Xi(D) = T D T^{*}$.
By ($j=1,2$)
\[
    \Lambda_j: (R_j^*R_j)' \ (\subseteq B(\mc V_j)) \to (R_jR_j^*)' \ (\subseteq B(\mc V)), 
	\ \Lambda_j(D_j) = R_j D_j R_j^{*} \,,
\]
we shall denote the corresponding mappings outgoing from the mappings $R_j: \mc V_j \to \mc V$. 
By \thref{existtreans} we have 
\[
    \Xi_j(D_j) = T_j R_j D_j R_j^* T_j^* = \Xi\circ\Lambda_j(D_j) \ \text{ for } 
	      \ D_j\in (R_j^*R_j)' \cap (T_j^*T_j)' \,.
\]
According to \thref{Xidefeig} in \cite{KaPr2014}, $\Lambda_j\circ \Gamma_j(D) = D R_jR_j^*$.
Hence, using the notation from \thref{normtransf}
\begin{equation}\label{zuef2}
    \Xi_j(\int h \, dE_j) = \Xi\circ\Lambda_j\circ \Gamma_j\left(\int h \, dE \right) = \Xi(R_jR_j^{*} \int h \, dE) \,.
\end{equation}
Finally, $T^{-1} T_jT_j^* T = T^{-1} T R_j R_j^* T^* T = R_j R_j^* T^* T$. In case that 
$T_1T_1^*$ and $T_2T_2^*$ commute we have $T_1T_1^*, T_2T_2^* \in (TT^*)'$ and the later equality can be
expressed as ($j=1,2$)
\begin{equation}\label{zuef3}
    \Theta(T_jT_j^*) = R_j R_j^* T^* T \,.
\end{equation}
%
%

\section{Normal definitizable operators}

\begin{definition}\thlab{definitnordef}
We will call a bounded linear and normal operator $N$ on a Krein Space 
\emph{definitizable} if its real part $A:=\frac{N+N^*}{2}$ and its 
imaginary part $B:=\frac{N-N^*}{2i}$ are both definitizable, i.e.\ there exist
real polynomials $p,q\in\bb R[z]$ such that $p$ is definitizing for $A$
($[p(A)x,x] \geq 0, \, x\in \mc K$) and such that $q$ is definitizing for $B$
($[q(B)x,x] \geq 0, \, x\in \mc K$); see \cite{langer1982}.
\end{definition}

By \thref{anderedefinitz} in \cite{KaPr2014} the definitizability of $A$ and $B$
is equivalent to the concept of definitizability in \cite{KaPr2014}.

Also note that in Pontryagin spaces any bounded linear and normal operator is
definitizable in the above sense; see \thref{pontrunit} in \cite{KaPr2014}.

\begin{proposition}\thlab{defNspaces}
  Let $A$ and $B$ be commuting, bounded linear, selfadjoint and definitizable operators on a Krein space
  $(\mc K,[.,.])$ with definitizing polynomials $p\in\bb R[z]$ for $A$ and $q\in\bb R[z]$ for $B$.
  Then there exist Hilbert spaces $(\mc V_1,[.,.])$, $(\mc V_2,[.,.])$, $(\mc V,[.,.])$ and
  bounded linear and injective operators $T_1: \mc V_1 \to \mc K$, $T_2: \mc V_2 \to \mc K$, $T: \mc V \to \mc K$
  such that
  \[
      T_1 T_1^* = p(A), \ T_2 T_2^* = q(B), \ T T^* = p(A)+q(B) = T_1 T_1^* + T_2 T_2^*
  \]
  with commuting $T_1 T_1^*$ and $T_2 T_2^*$.
  Moreover, if $\Theta: (TT^*)' \ (\subseteq B(\mc K)) \to (T^*T)' \ (\subseteq B(\mc V))$ is as in
  \eqref{thetaVdef} and $R_j: \mc V_j \to \mc V$ ($j=1,2$) are as in \thref{existtreans}, then
\begin{equation}\label{heaybab}
\begin{aligned}
    p(\Theta(A)) = R_1 R_1^* \big(p(\Theta(A)) + q(\Theta(B)) \big), \\
    q(\Theta(B)) = R_2 R_2^* \big(p(\Theta(A)) + q(\Theta(B)) \big)
    \,,
\end{aligned}
\end{equation}
    where $R_1 R_1^*$ and $R_2 R_2^*$ commute with $p(\Theta(A)) + q(\Theta(B))$.
\end{proposition}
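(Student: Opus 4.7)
The plan is to first construct the Hilbert spaces and factorising operators from the positivity of $p(A)$, $q(B)$ and their sum, and then to derive \eqref{heaybab} as a purely formal consequence of the results of Section~2.

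For the first step I would observe that $p(A)$, $q(B)$ and hence $p(A)+q(B)$ are all nonnegative in the form sense on the Krein space. Every such bounded operator $P$ on $(\mc K,[.,.])$ admits a factorisation $P=SS^*$ with $S$ a bounded, injective linear map from a Hilbert space (one equips $\ran P$ with the inner product $[Px,Py]_{\mc H}:=[Px,y]$, completes, and lets $S$ be the canonical embedding; its kernel is trivial because $Sx=0$ would force $x$ to be orthogonal to a dense subspace). Applying this construction to $p(A)$, $q(B)$ and $p(A)+q(B)$ produces the desired $(\mc V_1,T_1)$, $(\mc V_2,T_2)$ and $(\mc V,T)$. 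The commutativity of $A$ and $B$ yields $p(A)q(B)=q(B)p(A)$, i.e.\ $T_1T_1^*$ and $T_2T_2^*$ commute. With this in hand, \thref{existtreans} delivers the injective contractions $R_1,R_2$ with $T_j=TR_j$ and $R_1R_1^*+R_2R_2^*=I$, and asserts in particular that each $R_jR_j^*$ commutes with $T^*T$.

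The identities in \eqref{heaybab} then follow almost mechanically. Because $A$ and $B$ commute, they lie in $(p(A))'\cap(q(B))'\subseteq(TT^*)'$, so $\Theta(A)$ and $\Theta(B)$ are defined. The $*$-homomorphism property of $\Theta$ gives $p(\Theta(A))=\Theta(p(A))=\Theta(T_1T_1^*)$ and likewise $q(\Theta(B))=\Theta(T_2T_2^*)$. Identity \eqref{zuef3}, which is applicable precisely because $T_1T_1^*$ and $T_2T_2^*$ commute and therefore commute with $TT^*$, then reads $p(\Theta(A))=R_1R_1^*T^*T$ and $q(\Theta(B))=R_2R_2^*T^*T$. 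Summing and using $R_1R_1^*+R_2R_2^*=I$ produces $p(\Theta(A))+q(\Theta(B))=T^*T$, and multiplying by $R_jR_j^*$ yields both displayed equations of \eqref{heaybab}. The commutativity of $R_jR_j^*$ with $p(\Theta(A))+q(\Theta(B))=T^*T$ is exactly the last assertion of \thref{existtreans}.

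I do not anticipate a real obstacle here: the only step involving actual construction rather than algebraic bookkeeping is the Hilbert-space factorisation of a nonnegative Krein-space operator in the first paragraph, and this is a standard tool presumably already used in essentially the same form in \cite{KaPr2014}. Everything else is a direct assembly of \thref{existtreans}, identity \eqref{zuef3} and the $*$-homomorphism property of $\Theta$.
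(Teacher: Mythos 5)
Your proposal is correct and follows essentially the same route as the paper: the induced Hilbert spaces are built exactly as in the paper's proof (your extension of the inclusion of $\ran P$ is the same operator as the paper's adjoint of the embedding of $\mc K$ into the completion of $\mc K/\ker P$ with respect to $[P.,.]$), and \eqref{heaybab} is then obtained from $\Theta(p(A))=\Theta(T_1T_1^*)$, identity \eqref{zuef3} and \thref{existtreans}. The only cosmetic difference is that you recover $T^*T=p(\Theta(A))+q(\Theta(B))$ by summing the two identities and using $R_1R_1^*+R_2R_2^*=I$, whereas the paper reads it off directly from $T^*T=\Theta(TT^*)=\Theta(p(A)+q(B))$.
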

\begin{proof}
Let $(\mc V_1,[.,.])$ be the Hilbert space completion of $\mc K/\ker p(A)$ with respect to 
$[p(A).,.]$ and let $T_1: \mc V_1 \to \mc K$
be the adjoint of the embedding of $\mc K$ into $\mc V_1$. Since $T_1^*$ has dense range,
$T_1$ is injective.
Analogously let 
$(\mc V_2,[.,.])$ be the Hilbert space completion of $\mc K/\ker q(B)$ with respect to 
$[q(B).,.]$ and denote by $T_2:\mc V_2 \to \mc K$
the injective adjoint of the embedding of $\mc K$ into $\mc V_2$. 
Finally, let 
$(\mc V,[.,.])$ be the Hilbert space completion of $\mc K/(\ker p(A)+q(B))$ with respect 
to $[(p(A)+q(B)).,.]$ and let
$T: \mc V \to \mc K$ be the injective adjoint of the embedding of $\mc K$ into $\mc V$. 

From $[T T^* x,y] = [T^* x,T^* y]_{\mc V} = [x,y]_{\mc V} = [(p(A)+q(B))x,y]$,  
$[T_1 T_1^* x,y] = [T_1^* x,T_1^* y]_{\mc V_1} = [x,y]_{\mc V_1} = [p(A)x,y]$ and 
$[T_2 T_2^* x,y] = [q(B)x,y]$ for all $x,y\in\mc K$ we conclude that
  \[
      T_1 T_1^* = p(A), \ T_2 T_2^* = q(B), \ T T^* = p(A)+q(B) \,,
  \]
where $p(A)=T_1T_1^*$ and $q(B)=T_2T_2^*$ commute, because $A$ and $B$ do.

From \eqref{zuef3} and \thref{thetadefeig} in \cite{KaPr2014} we get
\begin{multline*}
      p(\Theta(A)) = \Theta(p(A)) = \Theta(T_1T_1^*) = R_1 R_1^* T^* T = R_1 R_1^* \Theta(TT^*) = \\
      R_1 R_1^* \Theta( p(A)+q(B) ) =  R_1 R_1^* \big(p(\Theta(A)) + q(\Theta(B)) \big) \,. 
\end{multline*}
Similarly, $q(\Theta(B)) = R_2 R_2^* (p(\Theta(A)) + q(\Theta(B)) )$.
Finally, $R_1 R_1^*$ and $R_2 R_2^*$ commute with $T^* T=p(\Theta(A)) + q(\Theta(B))$ by \thref{existtreans}.
\end{proof}

The fact that a normal operator is definitizable implies certain spectral properties
of $\Theta(N)$.

\begin{lemma}\thlab{speknorm}
With the notion of \thref{defNspaces} applied to the real part $A:=\frac{N+N^*}{2}$ and the
imaginary part $B:=\frac{N-N^*}{2i}$ of a bounded linear, normal and definitizable operator $N$
we have
\[
    \{z\in \bb C : |p(\Re z)| > \| R_1 R_1^* \| \cdot |p(\Re z) + q(\Im z)| \} \subseteq \rho(\Theta(N)) \,,
\]
and 
\[
    \{z\in \bb C : |q(\Im z)| > \| R_2 R_2^* \| \cdot |p(\Re z) + q(\Im z)| \} \subseteq \rho(\Theta(N)) \,.
\]
In particular, the zeros of $p(\Re z) + q(\Im z)$ are contained in 
$\rho(\Theta(N)) \cup \{z\in \bb C: p(\Re z) = 0 = q(\Im z)\}$.
\end{lemma}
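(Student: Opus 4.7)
My plan is to pass to the joint approximate point spectrum of the commuting self-adjoint operators $\Theta(A)$ and $\Theta(B)$ on the Hilbert space $\mc V$, and then read off the estimates directly from the key identities $p(\Theta(A)) = R_1 R_1^* C$ and $q(\Theta(B)) = R_2 R_2^* C$ of \thref{defNspaces}, where $C := p(\Theta(A)) + q(\Theta(B))$.

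The first step is to set up the Hilbert-space structure. Since $A$ commutes with $p(A) = T_1 T_1^*$ trivially and with $q(B) = T_2 T_2^*$ because $A, B$ commute, \thref{comreg} puts $\Theta(A) \in (R_1R_1^*)' \cap (R_2R_2^*)' \cap (T^*T)'$, and the $*$-algebra homomorphism property of $\Theta$ makes $\Theta(A)$ self-adjoint on $\mc V$. The same holds for $\Theta(B)$, and these two commute since $\Theta$ is multiplicative. Hence $\Theta(N) = \Theta(A) + i\Theta(B)$ is a normal operator on $\mc V$ and the commutativity yields the splitting identity
\[
\|(\Theta(N)-z)x\|^2 = \|(\Theta(A)-\lambda)x\|^2 + \|(\Theta(B)-\mu)x\|^2
\]
for every $x \in \mc V$ and $z = \lambda + i\mu$. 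Also, $R_1R_1^*$ commutes with $\Theta(A), \Theta(B)$, and hence with $p(\Theta(A)), q(\Theta(B))$ and $C$.

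Now take $z = \lambda + i\mu \in \sigma(\Theta(N))$. Normality yields unit approximate eigenvectors $x_n$ with $\|(\Theta(N)-z)x_n\| \to 0$, and the splitting identity forces $\|(\Theta(A)-\lambda)x_n\| \to 0$ and $\|(\Theta(B)-\mu)x_n\| \to 0$. A standard polynomial-calculus argument then gives $\|p(\Theta(A))x_n - p(\lambda)x_n\| \to 0$ and $\|q(\Theta(B))x_n - q(\mu)x_n\| \to 0$, whence $\|Cx_n - (p(\lambda)+q(\mu))x_n\| \to 0$. Combining this with $p(\Theta(A)) x_n = R_1R_1^* C x_n$ and the boundedness of $R_1R_1^*$ leads to
\[
\|p(\lambda)\, x_n - (p(\lambda)+q(\mu))\, R_1R_1^*\, x_n\| \to 0.
\]
Taking norms and using $\|x_n\|=1$ and $\|R_1R_1^* x_n\| \le \|R_1R_1^*\|$ gives $|p(\lambda)| \le \|R_1R_1^*\| \cdot |p(\lambda)+q(\mu)|$; contraposition yields the first asserted inclusion, and the analogous argument based on $q(\Theta(B)) = R_2R_2^* C$ gives the second.

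For the final claim, suppose $p(\Re z) + q(\Im z) = 0$. If $p(\Re z) \ne 0$, the first inclusion applies because $|p(\Re z)| > 0 = \|R_1R_1^*\| \cdot 0$ forces $z \in \rho(\Theta(N))$; if $q(\Im z) \ne 0$, the second inclusion does the job symmetrically; otherwise both $p(\Re z)$ and $q(\Im z)$ vanish, which is the exceptional set allowed in the conclusion. The only non-routine points are the selfadjointness and commutativity of $\Theta(A), \Theta(B)$ transferred through the $*$-homomorphism, and the normality splitting identity — neither is a real obstacle, but together they unlock the joint approximate eigenvector technique that drives the whole argument.
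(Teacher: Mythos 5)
Your proof is correct, but it takes a genuinely different route from the paper's. The paper argues through the spectral measure $E$ of $\Theta(N)$: for the open sets $\Delta_n=\{z\in\bb C : |p(\Re z)|^2>\tfrac1n+\|R_1R_1^*\|^2|p(\Re z)+q(\Im z)|^2\}$ it integrates $|p(\Re\zeta)|^2$ against $d[E(\zeta)x,x]$ for $x\in\ran E(\Delta_n)$ and, invoking \eqref{heaybab}, obtains $\|p(\Theta(A))x\|^2\geq\tfrac1n\|x\|^2+\|R_1R_1^*\big(p(\Theta(A))+q(\Theta(B))\big)x\|^2$, which forces $x=0$; since $\Delta_n$ is open and $E(\Delta_n)=0$, it lies in $\rho(\Theta(N))$, and the union over $n$ gives the inclusion. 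You instead exploit that the spectrum of a normal Hilbert-space operator is its approximate point spectrum, split $\|(\Theta(N)-z)x_n\|^2$ into the $\Theta(A)$- and $\Theta(B)$-parts (legitimate, since $\Re z$ and $\Im z$ are real and $\Theta(A),\Theta(B)$ are commuting selfadjoint operators on $\mc V$, which follows as you say from $\Theta$ being a $*$-homomorphism), and push the approximate eigenvectors through \eqref{heaybab} to obtain $|p(\Re z)|\leq\|R_1R_1^*\|\cdot|p(\Re z)+q(\Im z)|$ for all $z\in\sigma(\Theta(N))$, the contrapositive of the asserted inclusion; the second inclusion and the final claim follow exactly as you indicate. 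Both arguments pivot on the same key identity \eqref{heaybab} from \thref{defNspaces}, but yours dispenses with the spectral measure and the openness-of-$\Delta_n$ step, needing only the standard facts $\sigma=\sigma_{\mathrm{ap}}$ for normal operators and $p(\Theta(A))x_n-p(\Re z)x_n\to 0$, whereas the paper's version stays within the spectral-measure calculus that the rest of the note is built on.
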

\begin{proof}
We are going to show the first inclusion. The second one is shown in the same manner.
For this let $n\in \bb N$ and set 
\[
    \Delta_n:= \{z\in \bb C : |p(\Re z)|^2 > \frac{1}{n} + \| R_1 R_1^* \|^2 \cdot |p(\Re z) + q(\Im z)|^2 \} \,.
\]
For $x\in E(\Delta_n)(\mc V)$ we then have
\begin{multline*}
    \| p(\Theta(A)) x \|^2 = \int_{\Delta_n} |p(\Re \zeta)|^2 \, d[E(\zeta)x,x] \geq \\ \int_{\Delta_n} \frac{1}{n} \, d[E(\zeta)x,x] +
	\| R_1 R_1^* \|^2 \int_{\Delta_n} |p(\Re \zeta) + q(\Im \zeta)|^2 \, d[E(\zeta)x,x] \\ \geq
	\frac{1}{n} \|x\|^2 + \| R_1 R_1^* \big(p(\Theta(A)) + q(\Theta(B))x \|^2 \,.
\end{multline*}
By \eqref{heaybab} this inequality can only hold for $x=0$. Since $\Delta_n$ is open,
by the Spectral Theorem for normal operators on Hilbert spaces we have $\Delta_n \subseteq \rho(N)$.
The asserted inclusion now follows from 
\[
  \{z\in \bb C : |p(\Re z)| > \| R_1 R_1^* \| \cdot |p(\Re z) + q(\Im z)| \} = \bigcup_{n\in\bb N} \Delta_n \,.
\]
\end{proof}

\begin{corollary}\thlab{korvda}
    With the notation and assumptions from \thref{speknorm} we have
\begin{multline*}
	R_1R_1^* \, E\{z\in \bb C: p(\Re z) \not= 0 \text{ or } q(\Im z)\not=0\} =  \\
	\int_{\{z\in \bb C: p(\Re z) \not= 0 \text{ or } q(\Im z)\not=0\}} \frac{p(\Re z)}{p(\Re z) + q(\Im z)} \, dE(z) 
\end{multline*}
    and
\begin{multline*}
	R_2R_2^* \, E\{z\in \bb C: p(\Re z) \not= 0 \text{ or } q(\Im z)\not=0\} =  \\
	\int_{\{z\in \bb C: p(\Re z) \not= 0 \text{ or } q(\Im z)\not=0\}} \frac{q(\Re z)}{p(\Re z) + q(\Im z)} \, dE(z) 
\end{multline*}
\end{corollary}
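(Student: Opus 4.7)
The plan is to derive the identity by essentially ``dividing'' both sides of the first relation in \eqref{heaybab} by $p(\Theta(A))+q(\Theta(B))$, via the spectral calculus of the normal operator $\Theta(N)$ on the Hilbert space $\mc V$. The key inputs are \thref{speknorm}, which ensures both that $p(\Re z)/(p(\Re z)+q(\Im z))$ is well-defined and bounded on $\Delta\cap\sigma(\Theta(N))$, and \thref{comreg}, which gives the commutativity of $R_1R_1^*$ with $\Theta(N)$.

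Let $\Delta:=\{z\in\bb C:p(\Re z)\not=0\text{ or }q(\Im z)\not=0\}$. The ``in particular'' part of \thref{speknorm} yields $p(\Re z)+q(\Im z)\not=0$ for $z\in\Delta\cap\sigma(\Theta(N))$, and the first inclusion of \thref{speknorm} gives $|p(\Re z)|\le\|R_1R_1^*\|\cdot|p(\Re z)+q(\Im z)|$ on $\sigma(\Theta(N))$. Hence the integrand of the claimed identity is a well-defined bounded Borel function on $\Delta\cap\sigma(\Theta(N))$, with pointwise bound $\|R_1R_1^*\|$.

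Since $1/(p(\Re z)+q(\Im z))$ is itself not globally bounded on $\Delta\cap\sigma(\Theta(N))$, I truncate. Setting $\Delta_n:=\Delta\cap\{z:|p(\Re z)+q(\Im z)|>1/n\}$ and
\[
G_n:=\int_{\Delta_n}\frac{dE(z)}{p(\Re z)+q(\Im z)},
\]
the operator $G_n$ is bounded by the spectral calculus and satisfies $G_n\,(p(\Theta(A))+q(\Theta(B)))=E(\Delta_n)$. By \thref{comreg}, $R_1R_1^*$ commutes with $\Theta(N)$, hence (by taking adjoints, using that $R_1R_1^*$ is self-adjoint) also with $\Theta(N)^*$, and therefore with every bounded Borel function of $\Theta(N)$, in particular with $G_n$. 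Left-multiplying the first identity of \eqref{heaybab} by $G_n$ gives
\[
\int_{\Delta_n}\frac{p(\Re z)}{p(\Re z)+q(\Im z)}\,dE(z)=G_n\,p(\Theta(A))=R_1R_1^*\,E(\Delta_n).
\]

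Finally, I pass to $n\to\infty$: as $\bigcup_n\Delta_n$ agrees with $\Delta$ modulo a set disjoint from $\sigma(\Theta(N))$, $E(\Delta_n)\to E(\Delta)$ strongly, and dominated convergence for spectral integrals (using the uniform bound from the second paragraph) gives strong convergence of the left-hand side to $\int_\Delta p(\Re z)/(p(\Re z)+q(\Im z))\,dE(z)$. The second equality follows by the identical argument applied to the second relation in \eqref{heaybab}, with $R_2R_2^*$ and $q(\Theta(B))$ replacing $R_1R_1^*$ and $p(\Theta(A))$. The main obstacle is precisely the unboundedness of $(p(\Re z)+q(\Im z))^{-1}$ on $\Delta\cap\sigma(\Theta(N))$, which prevents a direct division; the truncation $\Delta_n$ combined with the limit handles this cleanly once the pointwise bound from \thref{speknorm} is available.
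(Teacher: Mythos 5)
Your argument is correct. It rests on the same two pillars as the paper's proof -- the bound $|p(\Re z)|\le\|R_1R_1^*\|\,|p(\Re z)+q(\Im z)|$ on $\sigma(\Theta(N))$ from \thref{speknorm} and the identity \eqref{heaybab} -- but the way you ``divide'' by $p(\Theta(A))+q(\Theta(B))$ differs. The paper restricts to $\mc H=\ran E\{z: p(\Re z)\neq0 \text{ or } q(\Im z)\neq0\}$, notes that $p(\Theta(A))+q(\Theta(B))$ is injective there and hence has dense range in $\mc H$, verifies the identity on vectors $x=(p(\Theta(A))+q(\Theta(B)))y$ by a direct computation, and extends by continuity; no limit process and no commutation of $R_1R_1^*$ with the spectral measure is needed, since everything is checked vectorwise. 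You instead multiply \eqref{heaybab} by the truncated inverses $G_n=\int_{\Delta_n}(p(\Re z)+q(\Im z))^{-1}\,dE(z)$ and pass to a strong limit; this requires two extra inputs, both of which you supply correctly: the commutation of $R_1R_1^*$ with $G_n$ and $E(\Delta_n)$ (available from \thref{comreg}, or directly from \thref{normtransf}, which already places $\int h\,dE$ in $(R_1R_1^*)'$), and the fact that $\Delta\setminus\bigcup_n\Delta_n\subseteq\rho(\Theta(N))$ so that $E(\Delta_n)\to E(\Delta)$ strongly -- which is exactly the ``in particular'' clause of \thref{speknorm}. The two routes are of comparable length; yours trades the paper's dense-range/continuity step for a truncation-and-dominated-convergence step, and makes the role of the commutant properties of $R_1R_1^*$ explicit, whereas the paper's vectorwise computation avoids them altogether.
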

\begin{proof}
    First note that the integrals on the right hand sides exist as bounded operators, because 
    by \thref{speknorm} we have $|p(\Re z)| \leq \| R_1 R_1^* \| \cdot |p(\Re z) + q(\Im z)|$ and
    $|q(\Im z)| \leq \| R_2 R_2^* \| \cdot |p(\Re z) + q(\Im z)|$ on $\sigma(\Theta(N))$.
    
    Clearly, both sides vanish on the range of $E\{z\in \bb C: p(\Re z) = 0 = q(\Im z)\}$. 
    Its orthogonal complement 
\begin{multline*}
	\mc H:=\ran E\{z\in \bb C: p(\Re z) = 0 = q(\Im z)\}^\bot = \\
	  \ran E\{z\in \bb C: p(\Re z) \not= 0 \text{ or } q(\Im z)\not=0\} \,, 
\end{multline*}
    is invariant under
    $\int \big(p(\Re z) + q(\Im z)\big) \, dE(z) = \big(p(\Theta(A)) + q(\Theta(B))\big)$. 
    By \thref{speknorm} the restriction of this operator to $\mc H$
    is injective, and hence, has dense range in $\mc H$.
    If $x$ belongs to this dense range, i.e.\ 
    $x=\big(p(\Theta(A)) + q(\Theta(B))\big) y$ with $y\in \mc H$, then
    \[
	\int_{\{z\in \bb C: p(\Re z) \not= 0 \text{ or } q(\Im z)\not=0\}} 
			      \frac{p(\Re z)}{p(\Re z) + q(\Im z)} \, dE(z) x = 
    \]
\begin{multline*}
	\int_{\{z\in \bb C: p(\Re z) \not= 0 \text{ or } q(\Im z)\not=0\}} p(\Re z) \, dE(z) y =
	p(\Theta(A))y = \\ R_1 R_1^* \big(p(\Theta(A)) + q(\Theta(B))\big) y = R_1 R_1^* x \,.
\end{multline*}
    By a density argument the first asserted equality of the present corollary holds true on $\mc H$
    and in turn on $\mc V$. 
    The second equality is shown in the same manner.
\end{proof}

\section{The proper function class}

In order to introduce a functional calculus 
we have to introduce an algebra structure on 
$\mc A_{m,n}:=(\mathbb C^{m} \otimes \mathbb C^{n}) \times \mathbb C^2 \simeq \mathbb C^{m\cdot n + 2}$
and on $\mc B_{m,n} := \mathbb C^{m} \otimes \mathbb C^{n} \simeq \mathbb C^{m\cdot n}$
for $m,n\in \bb N$. For notational
convenience we also set $\mc A_{0,0} := \mathbb C$.

\begin{definition}\thlab{muldefb1}
    Firstly, let $\mc A_{0,0} = \bb C$ be provided with the usual addition, scalar multiplication, 
    multiplication and conjugation.
    
    Secondly, in case that $m,n\in \bb N$ we provide $\mc A_{m,n}$ with the componentwise
    addition and scalar multiplication. Moreover, for 
    $a=(a_{k,l})_{(k,l)\in I_{m,n}}, b=(b_{k,l})_{(k,l)\in I_{m,n}}$ with 
    $I_{m,n}:=(\{0,\dots,m-1\}\times \{0,\dots,n-1\})\cup\{(m,0),(0,n)\}$ we set
\[
	a\cdot b := 
	\Big( \sum_{c=0}^k\sum_{d=0}^l a_{c,d} b_{k-c,l-d} \Big)_{(k,l)\in I_{m,n}} \ \text{ and } \
	\overline{a}:=\big(\bar a_{k,l}\big)_{(k,l)\in I_{m,n}} \,.
\]
    On $\mc B_{m,n}$ we define addition, scalar multiplication, multiplication and conjugation in the same way 
    only neglecting the the entries with indices $(m,0)$ and $(0,n)$.

Finally, for $m,n\in \bb N$ we introduce the projection 
$\pi: \mc A_{m,n} \to \mc B_{m,n}$, 
$(a_{k,l})_{(k,l)\in I_{m,n}} \mapsto (a_{k,l})_{\substack{0\leq k \leq m-1 \\ 0\leq l\leq n-1}}$.
On $\mc B_{m,n}$ we assume $\pi$ to be the identity. 
\end{definition}

\begin{remark}\thlab{z30f3}
It is easy to check that $\mc A_{m,n}$ and $\mc B_{m,n}$ are commutative, unital $*$-algebras.
Setting $e_{0,0}=1$ and $e_{k,l}=0, \ (k,l)\neq (0,0)$, it is easy to verify that
$\big(e_{k,l}\big)_{(k,l)\in I_{m,n}}$ is the multiplicative unite in $\mc A_{m,n}$ 
and $\big(e_{k,l}\big)_{\substack{0\leq k \leq m-1 \\ 0\leq l\leq n-1}}$ is the 
multiplicative unite in $\mc B_{m,n}$. We shall denote these unites by $e$.

Moreover, it is straight forward to check that an element $(a_{k,l})$ of $\mc A_{m,n}$ 
(of $\mc B_{m,n}$)
has a multiplicative inverse in $\mc A_{m,n}$ (in $\mc B_{m,n}$) if and only if $a_{0,0}\neq 0$.
\end{remark}

For the rest of the paper assume that $N$ bounded linear, normal and definitizable operator
in a Krein space $\mc K$ with real part $A$ and imaginary part $B$.
Moreover, we fix definitizing polynomials $p\in\bb R[z]$ for $A$ and 
$q\in\bb R[z]$ for $B$.

\begin{definition}\thlab{muldefb2}
    We define functions $\mf d_p, \mf d_q: \bb C \to \bb N\cup\{0\}$ 
    such that $\mf d_p(z)$ is $p$'s degrees of the zero at $z$ and 
    $\mf d_q(z)$ is $q$'s degrees of the zero at $z$.
    Moreover, we shall denote the set of their real zeros by $Z^{\bb R}_p$ and $Z^{\bb R}_q$, i.e.\
    \[
	Z^{\bb R}_p:=p^{-1}\{0\}\cap \bb R, Z^{\bb R}_q:=q^{-1}\{0\}\cap \bb R \,,
    \]
    and we set $Z^i:=(p^{-1}\{0\}\times q^{-1}\{0\})\setminus (\bb R\times \bb R)$.
    
    \noindent
    Now we are going to introduce class of functions:
    \begin{enumerate}[$(i)$]
    \item
    By $\mc M_{N}$ we denote the set of functions $\phi$ defined on 
    \[
      \big(\sigma(\Theta(N)) \cup (Z^{\bb R}_p + i Z^{\bb R}_q)\big)\dot\cup Z^i 
    \]
    with $\phi(z) \in \mf C(z)$, where $\mf C(z):=\mc B_{\mf d_p(\xi),\mf d_q(\eta)}$ for $z=(\xi,\eta) \in Z^i$
    and where
    $\mf C(z):=\mc A_{\mf d_p(\Re z),\mf d_q(\Im z)}$
    for $z \in \sigma(\Theta(N)) \cup (Z^{\bb R}_p + i Z^{\bb R}_q)$.
    \item
    We provide $\mc M_{N}$ pointwise with scalar multiplication, 
    addition and multiplication, where the operations on $\mc A_{\mf d_p(\Re z),\mf d_q(\Im z)}$
    or $\mc B_{\mf d_p(\xi),\mf d_q(\eta)}$ are as in \thref{muldefb1}.
    We also define a conjugate linear involution $.^\#$ on $\mc M_{N}$ by 
\begin{multline*}
	\phi^\#(z) = \overline{\phi(z)}, \ z \in \sigma(\Theta(N)) \cup (Z^{\bb R}_p + i Z^{\bb R}_q), \\
	\phi^\#(\xi,\eta) = \overline{\phi(\bar \xi,\bar \eta)}, \ (\xi,\eta) \in Z^i \,.
\end{multline*}
    \item
    By $\mc R$ we denote the set of all elements $\phi\in\mc M_{N}$ such that
    $\pi(\phi(z)) = 0$ for all $z \in (Z^{\bb R}_p + i Z^{\bb R}_q) \dot \cup Z^i$.
\end{enumerate}
\end{definition}

With the operations introduced in \thref{muldefb2} $\mc M_{N}$ is a commutative $*$-algebra as can be verified 
in a straight forward manner. Moreover, $\mc R$ is an ideal of $\mc M_{N}$.    
    
\begin{definition}\thlab{feinbetef}
    Let $f: \dom f \to \bb C$ be a function with $\dom f \subseteq \bb C^2$ such that
    $\tau\big(\sigma(\Theta(N)) \cup (Z^{\bb R}_p + i Z^{\bb R}_q)\big) \subseteq \dom f$, where 
    $\tau : \bb C \to \bb C^2, \ (x+iy)\mapsto (x,y)$, such that
    $f\circ \tau$ is sufficiently smooth -- more exactly, at least 
    $\max_{x,y\in \bb R} \mf d_p(x) + \mf d_q(y) - 1$ times continuously differentiable -- 
    on an open neighbourhood of $Z^{\bb R}_p + i Z^{\bb R}_q$, and
    such that $f$ is holomorphic on an open neighbourhood of $Z^i$.

    Then $f$ can be considered as an element $f_{N}$ of $\mc M_{N}$ by setting
    $f_{N}(z) := f\circ \tau(z)$ for $z\in \sigma(\Theta(N)) \setminus (Z^{\bb R}_p + i Z^{\bb R}_q)$, by
\[
    f_{N}(z) := 
    \big(\frac{1}{k!l!} \, \frac{\partial^{k+l}}{\partial x^k\partial y^l} 
	f\circ \tau (z)\big)_{(k,l)\in I_{\mf d_p(\Re z),\mf d_q(\Im z)}}
\]
    for $z\in Z^{\bb R}_p + i Z^{\bb R}_q$, and by 
    \[
	f_{N}(\xi,\eta) := \big(\frac{1}{k!l!} \, \frac{\partial^{k+l}}{\partial z^k\partial w^l} 
		f(\xi,\eta)\big)_{\substack{0\leq k \leq \mf d_p(\xi)-1 \\ 0\leq l\leq \mf d_q(\eta) - 1}} \,,
    \]
    for $(\xi,\eta)\in Z^i$.
\end{definition}

\begin{remark}\thlab{bweuh30}
By the Leibniz rule $f\mapsto f_{N}$ is compatible with multiplication. Obviously, it is also compatible with 
addition and scalar multiplication. If we define for a function $f$ as in \thref{feinbetef} the function 
$f^\#$ by $f^\#(z,w) = \overline{f(\bar z,\bar w)}, \ (z,w) \in \dom f$, then we also have 
$(f^\#)_{p,q} = (f_{N})^\#$. Note that in general $(\bar f)_{p,q} \not= (f_{N})^\#$.

Finally, note that $\mathds{1}_N(z)$ is the multiplicative unite in $\mf C(z)$ for all 
$z\in \big(\sigma(\Theta(N)) \cup (Z^{\bb R}_p + i Z^{\bb R}_q)\big)\dot\cup Z^i$.
\end{remark}

\begin{example}\thlab{fuinm0pre}
    For the constant one function $\mathds{1}$ on $\bb C^2$ we have $\mathds{1}_N(z) = e$
    for all $z\in \big(\sigma(\Theta(N)) \cup (Z^{\bb R}_p + i Z^{\bb R}_q)\big)\dot\cup Z^i$, where
    $e$ is the multiplicative unite in $\mf C(z)$; see \thref{z30f3}. 
\end{example}

\begin{example}\thlab{fuinm0}
 $p(z)$ considered as an element of $\bb C[z,w]$ is clearly holomorphic on $\bb C^2$.
 Hence, we can consider $p_N$ as defined in \thref{feinbetef}.
 It satisfies $p_{N}(z)_{k,l}=0$,  
 $(k,l)\in I_{\mf d_p(\Re z),\mf d_q(\Im z)}\setminus \{(\mf d_p(\Re z),0)\}$, and
 \[
      p_{N}(z)_{\mf d_p(\Re z),0} = \frac{1}{\mf d_p(\Re z)!} p^{(\mf d_p(\Re z))}(\Re z)
 \]
 for all $z \in Z^{\bb R}_p + i Z^{\bb R}_q$. 
 Since $\Re z$ is a zero of $p$ of degree
 exactly $\mf d_p(\Re z)$ the entries with index $(\mf d_p(\Re z),0)$ do not vanish. 
 Moreover, $p_{N}(\xi,\eta)=0$ for all $(\xi,\eta)\in Z^i$. In particular, $p_{N} \in \mc R$.
 
 Similarly, if $q(w)$ is considered as an element of $\bb C[z,w]$, then $q_{N}(z)_{k,l}=0$,  
 $(k,l)\in I_{\mf d_p(\Re z),\mf d_q(\Im z)}\setminus \{(0,\mf d_q(\Im z))\}$, and
 \[
      q_{N}(z)_{0,\mf d_q(\Im z)} = \frac{1}{\mf d_q(\Im z)!} q^{(\mf d_q(\Im z))}(\Im z) \neq 0
 \]
 for all $z \in Z^{\bb R}_p + i Z^{\bb R}_q$. 
 Also here $q_{N}(\xi,\eta)=0$ for all $(\xi,\eta)\in Z^i$ and, in turn, $q_{N} \in \mc R$.
\end{example}

We need an easy algebraic lemma based in the Euclidean algorithm.

\begin{lemma}\thlab{einbett2pre}
    For $a(z), b(z)\in \bb C[z]$ we denote by $a^{-1}\{0\}$ and $b^{-1}\{0\}$ the set of all zeros of
    $a$ and $b$ in $\bb C$, and by $\mf d_a(z)$ ($\mf d_b(z)$) $a$'s ($b$'s) 
    degree of zero at $z\in \bb C$. Denote by $m$ ($n$) the degree of the polynomial $a$ ($b$).
    Then any $s\in \bb C[z,w]$ can be written as 
    \[
	s(z,w) = a(z)u(z,w) + b(w)v(z,w) + r(z,w)
    \]
    with $u(z,w), v(z,w), r(z,w)\in \bb C[z,w]$
    such that $r$'s $z$-degree is less than $m$ and its $w$-degree is less than $n$.
    Here $u(z,w), v(z,w), r(z,w)$ can be found in $\bb R[z,w]$ if $a(z), b(z)\in \bb R[z], \ s\in \bb R[z,w]$.
    
    If we define $\varpi: \bb C[z,w] \to \bb C^{m\cdot n}$ by
    \[
	\varpi(s) = \left( \Big(\frac{1}{k!l!} \, \frac{\partial^{k+l}}{\partial z^k\partial w^l} 
	s(z,w)\Big)_{\substack{0\leq k \leq \mf d_a(z)-1 \\ 0\leq l\leq \mf d_b(w)-1}}\right)_{
	z\in a^{-1}\{0\}, w\in b^{-1}\{0\}} \,,
    \]
    then $s\in \ker \varpi$ if and only if $s(z,w)=a(z)u(z,w) + b(w) v(z,w)$
    for some $u(z,w), v(z,w) \in \bb C[z,w]$. Moreover, $\varpi$ restricted to the space of all
    polynomials from $\bb C[z,w]$ with $z$-degree less than $m$ and $w$-degree less than $n$
    is bijective.
\end{lemma}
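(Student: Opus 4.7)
The plan is to prove the three assertions in order: the decomposition formula, then the bijectivity of $\varpi$ on the degree-restricted subspace, and finally the kernel characterization, using the first two for the last one.

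For the decomposition, I would apply polynomial division twice. Viewing $s(z,w)$ as an element of $(\mathbb{C}[w])[z]$, polynomial long division by $a(z)$ is legitimate because the leading coefficient of $a$ is a nonzero constant, yielding $s(z,w) = a(z) u(z,w) + s_1(z,w)$ with $s_1$'s $z$-degree less than $m$. Now view $s_1$ as an element of $(\mathbb{C}[z]_{<m})[w]$ and divide by $b(w)$; because the $w$-division manipulates the $w$-coefficients only as $\mathbb{C}[z]_{<m}$-linear combinations, the resulting remainder $r(z,w)$ has both $z$-degree less than $m$ and $w$-degree less than $n$, while $v$ has $z$-degree less than $m$. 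The real case follows because division of real polynomials produces real quotient and remainder.

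For the bijectivity claim, the subspace $V \subseteq \mathbb{C}[z,w]$ of polynomials with $z$-degree less than $m$ and $w$-degree less than $n$ has dimension $mn$, matching $\dim \mathbb{C}^{mn}$; so it is enough to prove injectivity. I would identify $V \cong \mathbb{C}[z]_{<m} \otimes \mathbb{C}[w]_{<n}$ and observe that on product polynomials $p(z)q(w)$ the evaluation factors as
\[
    \tfrac{1}{k!\,l!}\,\tfrac{\partial^{k+l}}{\partial z^k \partial w^l} \bigl(p(z)q(w)\bigr)\big|_{(z_0,w_0)}
    = \tfrac{p^{(k)}(z_0)}{k!} \cdot \tfrac{q^{(l)}(w_0)}{l!},
\]
so by bilinearity $\varpi|_V = \varpi_a \otimes \varpi_b$, where $\varpi_a : \mathbb{C}[z]_{<m}\to\mathbb{C}^m$ and $\varpi_b:\mathbb{C}[w]_{<n}\to\mathbb{C}^n$ are the Hermite evaluation maps at the zeros of $a$ and $b$ respectively. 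Classical Hermite interpolation tells us $\varpi_a$ and $\varpi_b$ are bijections, and hence so is their tensor product.

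For the kernel characterization, the direction $(\Leftarrow)$ is a direct Leibniz computation: if $s = a u + b v$, then
\[
  \tfrac{\partial^{k+l}}{\partial z^k \partial w^l}\bigl(a(z)u(z,w)\bigr)\big|_{(z_0,w_0)}
  = \sum_{i=0}^{k}\binom{k}{i} a^{(i)}(z_0)\, \tfrac{\partial^{k-i+l}}{\partial z^{k-i}\partial w^l} u(z_0,w_0),
\]
which vanishes whenever $k < \mathfrak{d}_a(z_0)$, since then $a^{(i)}(z_0)=0$ for all $i \le k$; symmetrically for $b\,v$ when $l<\mathfrak{d}_b(w_0)$. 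For $(\Rightarrow)$, I would invoke the decomposition: write $s = a u + b v + r$ with $r \in V$; the just-established direction gives $\varpi(a u + b v) = 0$, so $\varpi(r) = 0$, and injectivity of $\varpi|_V$ forces $r = 0$. The main obstacle is the bijectivity in the middle step; everything else is bookkeeping once Hermite interpolation is invoked, so if one prefers to avoid quoting it, the obstacle shifts to giving a self-contained proof (e.g.\ by iterating the Euclidean algorithm inside $\mathbb{C}[z]$ and $\mathbb{C}[w]$ separately to exhibit a triangular basis for $V$ on which $\varpi$ is visibly invertible).
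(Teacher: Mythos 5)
Your proposal is correct, and its skeleton (double Euclidean division for the decomposition, a Leibniz computation for the easy kernel inclusion, reduction of the hard inclusion to injectivity on the degree-restricted space $V$) matches the paper's; the genuine difference lies in how, and in which order, the injectivity of $\varpi|_V$ is obtained. You prove bijectivity of $\varpi|_V$ first, by identifying $V\cong \bb C[z]_{<m}\otimes\bb C[w]_{<n}$, factoring $\varpi|_V=\varpi_a\otimes\varpi_b$ into the two univariate Hermite evaluation maps, and quoting classical Hermite interpolation; the kernel characterization then falls out of the decomposition plus injectivity. The paper goes the other way around: it proves the kernel characterization directly, showing that a remainder $r\in V$ with $\varpi(r)=0$ vanishes identically by a two-step zero-counting argument (for each root $\zeta$ of $a$ and each $k<\mf d_a(\zeta)$, the polynomial $w\mapsto \partial_z^k r(\zeta,w)$ has $n$ zeros counted with multiplicity but degree less than $n$, hence is zero; then for each fixed $\eta$ the polynomial $z\mapsto r(z,\eta)$ is zero for the same reason), and only afterwards deduces injectivity of $\varpi|_V$ from the kernel description, with surjectivity by dimension count. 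Your route is more modular and shorter if one is content to cite univariate Hermite interpolation and the fact that a tensor product of bijections is a bijection; the paper's argument is self-contained and in effect re-proves exactly the uniqueness half of Hermite interpolation in the bivariate setting by elementary counting of zeros, which is what your own fallback suggestion at the end would amount to. Either way the logic closes, so there is no gap.
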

\begin{proof}
    Applying the Euclidean algorithm to $s(z,w)\in \bb C[z,w]$ and $a(z)$ we get
    $s(z,w) = a(z)u(z,w) + t(z,w)$, where $u(z,w), t(z,w)\in \bb C[z,w]$ such that
    $t$'s $z$-degree is less than $m$. Applying the Euclidean algorithm to $t(z,w)$ and $b(w)$ we get
    \[
	s(z,w) = a(z)u(z,w) + b(w)v(z,w) + r(z,w) 
    \]
    with $v(z,w), r(z,w)\in \bb C[z,w]$
    such that $r$'s $z$-degree is less than $m$ and its $w$-degree is less than $n$.
    The resulting polynomials $u(z,w), t(z,w), v(z,w), r(z,w)$ belong to $\bb R[z,w]$ if
    $a(z), b(z)\in \bb R[z], \ s(z,w)\in \bb R[z,w]$.
    
    In any case it is easy to check that then $\varpi(s)=\varpi(r)$. 
    Hence, $r(z,w)=0$ yields $s(z,w)\in \ker \varpi$. On the other hand, if $0=\varpi(s)=\varpi(r)$, then
    for each fixed $\zeta \in a^{-1}\{0\}$ and $k\in\{0,\dots, \mf d_a(\zeta)-1\}$ the function 
    $w\mapsto \frac{\partial^{k}}{\partial z^k} r(\zeta,w)$ has zeros at all 
    $w\in b^{-1}\{0\}$ with multiplicity at least $\mf d_b(w)$. 
    Since $w\mapsto \frac{\partial^{k}}{\partial z^k} r(\zeta,w)$ is of 
    $w$-degree less than $n$, it must be identically equal to zero.
    
    This implies that for any $\eta \in \bb C$ the polynomial $z \mapsto r(z,\eta)$ has zeros at all 
    $\zeta\in a^{-1}\{0\}$ with multiplicity at least $\mf d_a(\zeta)$. Since the degree of this polynomial in $z$
    is less than $m$, we obtain $r(z,\eta)=0$ for any $z\in \bb C$. Thus, $r\equiv 0$.

    Our description of $\ker \varpi$ shows in particular that $\varpi$ restricted to the space of all
    polynomials from $\bb C[z,w]$ with $z$-degree less than $m$ and $w$-degree less than $n$
    is one-to-one. Comparing dimensions shows that this restriction of $\varpi$ is also onto.
\end{proof}

\begin{corollary}\thlab{existbe}
    With the notation from \thref{muldefb2} 
    for any $\phi \in \mc M_{N}$ we find an 
    $s\in \bb C[z,w]$ such that $\phi - s_N \in \mc R$.
\end{corollary}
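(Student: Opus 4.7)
The plan is to build $s$ by Hermite interpolation via \thref{einbett2pre} (applied with $a=p$, $b=q$), reading the prescribed derivative data off $\phi$ at the finitely many points of $p^{-1}\{0\}\times q^{-1}\{0\}$.

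First I would assemble the data tuple $v$ that \thref{einbett2pre} expects. For every $(\zeta,\eta)\in p^{-1}\{0\}\times q^{-1}\{0\}$ and every index $(k,l)$ with $0\le k\le \mf d_p(\zeta)-1$, $0\le l\le \mf d_q(\eta)-1$, I put
\[
v_{(\zeta,\eta),k,l}:=
\begin{cases}
\phi(\zeta,\eta)_{k,l}, & (\zeta,\eta)\in Z^i,\\[1mm]
\phi(\zeta+i\eta)_{k,l}, & (\zeta,\eta)\in Z^{\bb R}_p\times Z^{\bb R}_q.
\end{cases}
\]
This uses exactly the components of $\phi$ that the projection $\pi$ from \thref{muldefb1} retains; in particular the two ``extra'' entries of $\phi(\zeta+i\eta)\in\mc A_{\mf d_p(\zeta),\mf d_q(\eta)}$ at positions $(\mf d_p(\zeta),0)$ and $(0,\mf d_q(\eta))$, which $\pi$ kills, play no role.

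Next I would invoke \thref{einbett2pre}: since $\varpi$ restricted to polynomials of $z$-degree less than $\deg p$ and $w$-degree less than $\deg q$ is bijective, there is an $s\in\bb C[z,w]$ with $\varpi(s)=v$, i.e.\
\[
\tfrac{1}{k!\,l!}\tfrac{\partial^{k+l}}{\partial z^k\partial w^l}s(\zeta,\eta)=v_{(\zeta,\eta),k,l}
\]
for all admissible triples.

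Finally, to check $\phi-s_N\in\mc R$, I would compare $s_N$ with $\phi$ pointwise on $(Z^{\bb R}_p+iZ^{\bb R}_q)\,\dot\cup\,Z^i$ (nothing has to be checked on $\sigma(\Theta(N))\setminus(Z^{\bb R}_p+iZ^{\bb R}_q)$, where $\mc R$ imposes no condition). For $(\xi,\eta)\in Z^i$ the definition of $s_N$ in \thref{feinbetef} yields $s_N(\xi,\eta)=\phi(\xi,\eta)$ directly. For $z=x+iy$ with $(x,y)\in Z^{\bb R}_p\times Z^{\bb R}_q$ the holomorphicity of $s\in\bb C[z,w]$ guarantees that the real partial derivatives $\tfrac{\partial^{k+l}}{\partial x^k\partial y^l}(s\circ\tau)(x+iy)$ coincide with $\tfrac{\partial^{k+l}}{\partial z^k\partial w^l}s(x,y)$, so $s_N(z)_{k,l}=\phi(z)_{k,l}$ for precisely the indices that survive $\pi$, giving $\pi((\phi-s_N)(z))=0$. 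The main obstacle is really just the bookkeeping — lining up the index sets $I_{m,n}$ from \thref{muldefb2} with the target indexing of $\varpi$ in \thref{einbett2pre}; the only mild conceptual point is the coincidence of real and complex partial derivatives at real points, which is automatic because $s$ is a polynomial in $z,w$ alone.
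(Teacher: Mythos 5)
Your proposal is correct and follows essentially the same route as the paper: apply \thref{einbett2pre} with $a=p$, $b=q$ to interpolate the data $\pi(\phi(z))$ at $Z^{\bb R}_p+iZ^{\bb R}_q$ and $\phi(\xi,\eta)$ at $Z^i$, then conclude $\phi-s_N\in\mc R$ from the definition of $\mc R$. Your extra verification that the derivatives in \thref{feinbetef} match those in $\varpi$ is just a spelled-out version of what the paper leaves implicit.
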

\begin{proof}
    By \thref{einbett2pre} there exists an $s\in \bb C[z,w]$ such that 
    $\varpi(s)_{\Re z,\Im z} = \pi(\phi(z))$ for all 
    $z \in Z^{\bb R}_p + i Z^{\bb R}_q$, and 
    such that $\varpi(s)_{\xi,\eta} = \phi(\xi,\eta)$ for all $(\xi,\eta) \in Z^i$.
    According to $\mc R$'s definition we obtain $\phi - s_N \in \mc R$. 
\end{proof}

\begin{remark}\thlab{durchdiv}
    Recall from \thref{speknorm} that $p(\Re z) + q(\Im z)=0$ with $z\in \sigma(\Theta(N))$ 
    implies $p(\Re z) = 0 = q(\Im z)$, i.e.\ $z\in Z^{\bb R}_p + i Z^{\bb R}_q$. 
    
    If $\phi\in \mc R$, then we find a function $g$ on $\sigma(\Theta(N))$ with 
    $g(z)\in \bb C$ for $z\in \sigma(\Theta(N))\setminus (Z^{\bb R}_p + i Z^{\bb R}_q)$ and
    $g(z)\in \bb C^2$ for $z\in \sigma(\Theta(N))\cap (Z^{\bb R}_p + i Z^{\bb R}_q)$, such that 
    $\phi(z) = (p_N + q_N)(z)\cdot g(z), \ z \in \sigma(\Theta(N))$; see \thref{fuinm0}. 
    Here $(p_N + q_N)(z) \cdot g(z)$
    is the usual multiplication on $\bb C$ for $z\in \sigma(\Theta(N))\setminus (Z^{\bb R}_p + i Z^{\bb R}_q)$, 
    whereas
    \[
    \big((p_N + q_N)(z) \cdot g(z)\big)_{k,l} = 0, \ k = 0,\dots,\mf d_p(\Re z)-1; l = 0,\dots,\mf d_q(\Im z)-1 \,,
    \]
    and 
    \[
    \big((p_N + q_N)(z)\cdot g(z)\big)_{\mf d_p(\Re z),0} 
	= (p_N + q_N)(z)_{\mf d_p(\Re z),0} \, \cdot g_1(z) \,,
    \]\[
	\big((p_N + q_N)(z)\cdot g(z)\big)_{0,\mf d_q(\Im z)} 
	= (p_N + q_N)(z)(z)_{0,\mf d_q(\Im z)} \, \cdot g_2(z) \,.
   \]
    for $z\in \sigma(\Theta(N))\cap (Z^{\bb R}_p + i Z^{\bb R}_q)$.
    
    In fact, we simply set $g(z):=\frac{\phi(z)}{p(\Re z)+q(\Im z)}$ for
    $z\in \sigma(\Theta(N))\setminus (Z^{\bb R}_p + i Z^{\bb R}_q)$ and
    \[
	g_1(z) := \frac{\mf d_p(\Re z)! \, \phi(z)_{(\mf d_p(\Re z),0)}}{p^{(\mf d_p(\Re z))}(\Re z)}, \
	g_1(z) := \frac{\mf d_q(\Im z)! \, \phi(z)_{(0,\mf d_q(\Im z))}}{q^{(\mf d_q(\Im z))}(\Im z)}
    \]
    for $z\in \sigma(\Theta(N))\cap (Z^{\bb R}_p + i Z^{\bb R}_q)$.   
\end{remark}

We are going to introduce a subclass of $\mc M_{N}$, which will be the proper class, in order to build up our functional
calculus.

\begin{definition}\thlab{FdefklM}
    With the notation from \thref{muldefb2} we denote by $\mc F_{N}$ the set of all elements 
    $\phi \in \mc M_{N}$ such that
    $z\mapsto \phi(z)$ is Borel measurable and bounded on 
    $\sigma(\Theta(N)) \setminus (Z^{\bb R}_p + i Z^{\bb R}_q)$, and such that
    for each $w \in \sigma(\Theta(N)) \cap (Z^{\bb R}_p + i Z^{\bb R}_q)$ 
     \begin{equation}\label{fn8qw3b}
    \frac{\phi(z) - 
		\sum_{k=0}^{\mf d_p(\Re w)-1}\sum_{l=0}^{\mf d_q(\Im w)-1} 
		\phi(w)_{k,l} \Re(z-w)^k \Im(z-w)^l}{\max(|\Re(z-w)|^{\mf d_p(\Re w)},|\Im (z-w)|^{\mf d_q(\Im w)})}
     \end{equation}
    is bounded for $z\in \sigma(\Theta(N))\cap U(w)\setminus \{w\}$, 
    where $U(w)$ is a sufficiently small neighbourhood of $w$.
\end{definition}

Note that \eqref{fn8qw3b} is immaterial if $w$ is an isolated point of $\sigma(\Theta(N))$.

\begin{example}\thlab{fedela}
    For $\zeta \in (Z^{\bb R}_p + i Z^{\bb R}_q)\dot\cup Z^i$ and $a\in \mf C(\zeta)$ consider the functions
    $a\delta_\zeta \in \mc M_{N}$ which assumes the value $a$ at $\zeta$ and the value zero on the rest of
    $\big(\sigma(\Theta(N)) \cup (Z^{\bb R}_p + i Z^{\bb R}_q)\big)\dot\cup Z^i$
    
    If $\zeta$ belongs to $Z^i$ or if $\zeta$ is an isolated point of 
    $\sigma(\Theta(N)) \cup (Z^{\bb R}_p + i Z^{\bb R}_q)$, then $a\delta_\zeta$
    belongs to $\mc F_{N}$.
\end{example}

\begin{remark}\thlab{taylormehrdimrem}
    Let $h$ be defined on an open subset $D$ of $\bb R^2$ with values in $\bb C$.
    Moreover, assume that for given $m,n\in\bb N$ the function $h$ is 
    $m+n$ times continuously differentiable. Finally, fix $w\in D$.
    
    The well-known Taylors Approximation Theorem from multidimensional calculus then yields
    \[
	h(z) = \sum_{j=0}^{m+n-1} \sum_{\stackrel{k,l\in \bb N_0}{k+l=j}}  
		\frac{1}{k!l!}  
		\frac{\partial^j h}{\partial x^{k} \partial y^{l}}(w)\Re(z-w)^k \Im(z-w)^{l} 
		+ O(|z-w|^{m+n}) 
    \]
    for $z\to w$. Since $|z-w|^{m+n} \leq 2^{m+n}\max(|\Re(z-w)|^{m+n}, |\Im(z-w)|^{m+n}) 
    = O(\max(|\Re(z-w)|^m, |\Im(z-w)|^{n}))$ and since 
    $\Re(z-w)^k \Im(z-w)^{l} = O(\max(|\Re(z-w)|^m, |\Im(z-w)|^{n}))$ for 
    $k \geq m$ or $l\geq n$, we also have
\begin{multline*}
	h(z) = \sum_{k=0}^{m-1} \sum_{l=0}^{n-1} \frac{1}{k!l!}
		\frac{\partial^{k+l} h}{\partial x^{k} \partial y^{l}}(w)\Re(z-w)^k \Im(z-w)^{l} + 
		\\ O(\max(|\Re(z-w)|^m, |\Im(z-w)|^{n})) \,.
\end{multline*}
\end{remark}

\begin{lemma}\thlab{gehzuF}
    Let $f: \dom f \ (\subseteq \bb C^2)\to \bb C$ be a function with the properties
    mentioned in \thref{feinbetef}. Then $f_N$ belongs to $\mc F_{N}$.
\end{lemma}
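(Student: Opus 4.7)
The plan is to verify the two defining requirements of $\mc F_N$ (see Definition \thref{FdefklM}) for $f_N$.

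For the first requirement -- measurability and boundedness of $z\mapsto f_N(z)$ on $\sigma(\Theta(N))\setminus(Z^{\bb R}_p + i Z^{\bb R}_q)$ -- I note that here $f_N(z)=f\circ\tau(z)$. The smoothness of $f\circ\tau$ near $Z^{\bb R}_p + i Z^{\bb R}_q$ and the holomorphy of $f$ near $Z^i$ provided by Definition \thref{feinbetef} give continuity of $f\circ\tau$ on $\tau(\sigma(\Theta(N)))$. Since $\sigma(\Theta(N))$ is compact, $f_N$ is automatically bounded and Borel measurable on that set.

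The main step is the growth condition at a point $w\in\sigma(\Theta(N))\cap(Z^{\bb R}_p + i Z^{\bb R}_q)$. I would fix such a $w$ and write $m:=\mf d_p(\Re w)$, $n:=\mf d_q(\Im w)$. The inequality $m+n-1 \leq \max_{x,y}(\mf d_p(x)+\mf d_q(y))-1$ ensures that $h:=f\circ\tau$ has enough continuous derivatives on a neighbourhood of $\tau(w)$ to apply the two-variable Taylor expansion from Remark \thref{taylormehrdimrem} with parameters $m$ and $n$. This yields
\[
h(z)=\sum_{k=0}^{m-1}\sum_{l=0}^{n-1}\tfrac{1}{k!\,l!}\tfrac{\partial^{k+l}h}{\partial x^k\partial y^l}(\tau(w))\,\Re(z-w)^k\Im(z-w)^l + O\bigl(\max(|\Re(z-w)|^m,|\Im(z-w)|^n)\bigr)
\]
as $z\to w$. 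By the very definition of $f_N$ in Definition \thref{feinbetef}, the Taylor coefficients on the right agree with the entries $f_N(w)_{k,l}$ for $(k,l)\in\{0,\dots,m-1\}\times\{0,\dots,n-1\}$. Hence the numerator of \eqref{fn8qw3b} coincides with the Taylor remainder, and dividing by $\max(|\Re(z-w)|^m,|\Im(z-w)|^n)$ produces a bounded expression on $\sigma(\Theta(N))\cap U(w)\setminus\{w\}$ for a sufficiently small neighbourhood $U(w)$ of $w$.

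I expect the principal technical obstacle to be the bookkeeping that matches Taylor coefficients with the multi-index entries of $f_N(w)$, together with the verification that the $\max$-form of the Taylor remainder indeed follows from the regularity granted by the hypothesis (one can, if needed, split the leftover monomials $\Re(z-w)^k\Im(z-w)^l$ with $k\ge m$ or $l\ge n$ and observe that each is dominated by $\max(|\Re(z-w)|^m,|\Im(z-w)|^n)$ times a factor bounded near $w$). With both conditions of Definition \thref{FdefklM} thus established, the conclusion $f_N\in\mc F_N$ follows.
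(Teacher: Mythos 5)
Your main step is exactly the paper's argument: for each $w\in\sigma(\Theta(N))\cap(Z^{\bb R}_p+iZ^{\bb R}_q)$ you apply Remark \thref{taylormehrdimrem} with $m=\mf d_p(\Re w)$ and $n=\mf d_q(\Im w)$ (the smoothness hypothesis in \thref{feinbetef} supplies the needed continuous derivatives of $f\circ\tau$ near $w$), identify the Taylor coefficients with the entries $f_N(w)_{k,l}$ from \thref{feinbetef}, and conclude that \eqref{fn8qw3b} is $O\big(\max(|\Re(z-w)|^{\mf d_p(\Re w)},|\Im(z-w)|^{\mf d_q(\Im w)})\big)$, hence bounded near $w$; the paper's proof consists of precisely this computation.

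One caveat on your treatment of the first requirement of \thref{FdefklM}: the claim that the hypotheses of \thref{feinbetef} give continuity of $f\circ\tau$ on all of $\tau(\sigma(\Theta(N)))$ is not correct. That definition only demands smoothness of $f\circ\tau$ on a neighbourhood of $Z^{\bb R}_p+iZ^{\bb R}_q$ and holomorphy of $f$ near $Z^i$; it imposes no regularity elsewhere on $\tau(\sigma(\Theta(N)))$, and the intended applications (e.g.\ the characteristic functions in \thref{prf55o1}) are genuinely discontinuous there. Boundedness and Borel measurability of $f_N$ on $\sigma(\Theta(N))\setminus(Z^{\bb R}_p+iZ^{\bb R}_q)$ must therefore be read as an implicit standing assumption on the functions admitted in \thref{feinbetef} rather than deduced from compactness; the paper's own proof silently passes over this point as well. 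Your Taylor argument, which is the substantive part, is unaffected.
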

\begin{proof}
    For fixed $w \in \sigma(\Theta(N)) \cap (Z^{\bb R}_p + i Z^{\bb R}_q)$ and  
    $z\in \sigma(\Theta(N))\setminus (Z^{\bb R}_p + i Z^{\bb R}_q)$ by \thref{taylormehrdimrem}
    the expression
    \[
	  f_N(z) - \sum_{k=0}^{\mf d_p(\Re w)-1}\sum_{l=0}^{\mf d_q(\Im w)-1} 
		f_N(w)_{k,l} \Re(z-w)^k \Im(z-w)^l =
    \]
    \[
	f\circ\tau(z) - 
      \sum_{k=0}^{\mf d_p(\Re w)-1}\sum_{l=0}^{\mf d_q(\Im w)-1} 
		\frac{1}{k!l!} \frac{\partial^{k+l} f\circ \tau}{\partial x^k\partial y^l}(w) \Re(z-w)^k \Im(z-w)^l
    \]
    is a $O(\max(|\Re(z-w)|^{\mf d_p(\Re w)}, |\Im(z-w)|^{\mf d_q(\Im w)}))$ for $z\to w$. 
    Therefore, $f_N\in \mc F_{N}$.
\end{proof}

In order to be able to prove spectral results for our functional calculus, we need that
with $\phi$ also $z\mapsto \phi(z)^{-1}$ belongs to $\mc F_N$ if $\phi$ is bounded away from zero.

\begin{lemma}\thlab{einduF}
    If $\phi \in \mc F_{N}$ is such that $\phi(z)$ is invertible in $\mf C(z)$ (see \thref{z30f3}) for all 
    $z\in \big(\sigma(\Theta(N)) \cup (Z^{\bb R}_p + i Z^{\bb R}_q)\big)\dot\cup Z^i$
    and such that 
    $0$ does not belong to the closure of 
    $\phi\big(\sigma(\Theta(N))\setminus (Z^{\bb R}_p + i Z^{\bb R}_q) \big)$, then
    $\phi^{-1}: z\mapsto \phi(z)^{-1}$ also belongs to $\mc F_{N}$.
\end{lemma}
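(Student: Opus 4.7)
The plan is to verify the three requirements of \thref{FdefklM} for $\phi^{-1}$: membership in $\mc M_{N}$, measurability and boundedness on $\sigma(\Theta(N)) \setminus (Z^{\bb R}_p + i Z^{\bb R}_q)$, and the Taylor-type growth bound at every $w \in \sigma(\Theta(N)) \cap (Z^{\bb R}_p + i Z^{\bb R}_q)$.

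The first two requirements are easy. Membership in $\mc M_N$ is given by the hypothesis of pointwise invertibility. On $\sigma(\Theta(N)) \setminus (Z^{\bb R}_p + i Z^{\bb R}_q)$ the value $\phi(z)$ is a nonzero complex scalar and $\phi^{-1}(z) = 1/\phi(z)$; the hypothesis that $0$ does not lie in the closure of $\phi\bigl(\sigma(\Theta(N)) \setminus (Z^{\bb R}_p + i Z^{\bb R}_q)\bigr)$ supplies a uniform lower bound $|\phi(z)| \geq \delta > 0$, whence $|\phi^{-1}(z)| \leq 1/\delta$. Borel measurability of $\phi^{-1}$ then follows from measurability of $\phi$ and continuity of scalar inversion away from the origin.

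The technical work lies in the growth estimate. Fix $w \in \sigma(\Theta(N)) \cap (Z^{\bb R}_p + i Z^{\bb R}_q)$ and set $m := \mf d_p(\Re w)$, $n := \mf d_q(\Im w)$. Since $Z^{\bb R}_p + i Z^{\bb R}_q$ is a finite set, I shrink $U(w)$ so that $U(w) \cap (Z^{\bb R}_p + i Z^{\bb R}_q) = \{w\}$; then $\sigma(\Theta(N)) \cap U(w) \setminus \{w\}$ lies inside $\sigma(\Theta(N)) \setminus (Z^{\bb R}_p + i Z^{\bb R}_q)$, so $\phi$ and $\phi^{-1}$ are scalar-valued there. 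Define the polynomials
\begin{equation*}
T_\phi(z) := \sum_{k=0}^{m-1}\sum_{l=0}^{n-1}\phi(w)_{k,l}\,\Re(z-w)^k\Im(z-w)^l,
\end{equation*}
\begin{equation*}
T_{\phi^{-1}}(z) := \sum_{k=0}^{m-1}\sum_{l=0}^{n-1}\phi^{-1}(w)_{k,l}\,\Re(z-w)^k\Im(z-w)^l.
\end{equation*}
The definition of the product on $\mc A_{m,n}$ in \thref{muldefb1} is precisely truncated polynomial multiplication, so for $(K,L)\in\{0,\dots,m-1\}\times\{0,\dots,n-1\}$ the coefficient of $\Re(z-w)^K\Im(z-w)^L$ in the scalar product $T_\phi\cdot T_{\phi^{-1}}$ coincides with $\bigl(\phi(w)\cdot \phi^{-1}(w)\bigr)_{K,L}=\delta_{K,0}\delta_{L,0}$. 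Every remaining monomial of $T_\phi T_{\phi^{-1}} - 1$ therefore has $\Re$-degree $\geq m$ or $\Im$-degree $\geq n$, yielding
\begin{equation*}
T_\phi(z)T_{\phi^{-1}}(z) = 1 + O\bigl(\max(|\Re(z-w)|^{m},|\Im(z-w)|^{n})\bigr),\quad z\to w.
\end{equation*}

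Combining this with $\phi(z) = T_\phi(z) + O\bigl(\max(|\Re(z-w)|^m,|\Im(z-w)|^n)\bigr)$ from $\phi\in\mc F_{N}$, and using that $T_{\phi^{-1}}$ is bounded on a compact neighbourhood of $w$, gives $T_{\phi^{-1}}(z)\phi(z) = 1 + O\bigl(\max(|\Re(z-w)|^m,|\Im(z-w)|^n)\bigr)$. Since $\phi^{-1}(z)\phi(z) = 1$, multiplying by $\phi^{-1}(z)$ yields
\begin{equation*}
\phi^{-1}(z) - T_{\phi^{-1}}(z) = \phi^{-1}(z)\bigl(1 - T_{\phi^{-1}}(z)\phi(z)\bigr),
\end{equation*}
and the bound $|\phi^{-1}(z)|\leq 1/\delta$ closes the estimate. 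The main obstacle is the combinatorial identification of the truncated polynomial product $T_\phi\cdot T_{\phi^{-1}}$ with the algebraic product $\phi(w)\cdot\phi^{-1}(w)=e$ in $\mc A_{m,n}$; once this is recognized, the analytic estimate is routine.
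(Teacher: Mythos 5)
Your proposal is correct and follows essentially the same route as the paper: both rest on the observation that the product in $\mc A_{m,n}$ is truncated polynomial multiplication, so $\phi(w)\cdot\phi^{-1}(w)=e$ yields $T_\phi(z)T_{\phi^{-1}}(z)=1+O\big(\max(|\Re(z-w)|^{\mf d_p(\Re w)},|\Im(z-w)|^{\mf d_q(\Im w)})\big)$, combined with the uniform bound on $1/\phi$ and the Taylor-type estimate coming from $\phi\in\mc F_N$. The only difference is bookkeeping: the paper splits $\phi^{-1}-T_{\phi^{-1}}$ as $\big(\tfrac{1}{\phi}-\tfrac{1}{T_\phi}\big)+\big(\tfrac{1}{T_\phi}-T_{\phi^{-1}}\big)$ and uses $\phi(w)_{0,0}\neq 0$ to bound $1/T_\phi$ near $w$, whereas you use the single identity $\phi^{-1}-T_{\phi^{-1}}=\phi^{-1}\big(1-\phi\,T_{\phi^{-1}}\big)$, which avoids that step.
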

\begin{proof}
    By the first assumption $\phi^{-1}$ is a well-defined object belonging to $\mc M_{N}$.
    Clearly, with $\phi$ also $z\mapsto \phi(z)^{-1}=\frac{1}{\phi(z)}$ is
    measurable on $\sigma(\Theta(N)) \setminus (Z^{\bb R}_p + i Z^{\bb R}_q)$.
    By the second assumption $z\mapsto \phi(z)^{-1}=\frac{1}{\phi(z)}$ is bounded on this set.
    
    It remains to verify the boundedness of \eqref{fn8qw3b} on a certain neighbourhood of $w$ 
    for each $w \in \sigma(\Theta(N)) \cap (Z^{\bb R}_p + i Z^{\bb R}_q)$ for $\phi^{-1}$. To do so, 
    we calculate for $z\in \sigma(\Theta(N)) \setminus (Z^{\bb R}_p + i Z^{\bb R}_q)$
    \begin{equation}\label{dfbz45pre}
	      \phi^{-1}(z) - 
		\sum_{k=0}^{\mf d_p(\Re w)-1}\sum_{l=0}^{\mf d_q(\Im w)-1} 
		\phi^{-1}(w)_{k,l} \Re(z-w)^k \Im(z-w)^l = 
    \end{equation}
    \begin{equation}\label{dfbz45}
	  \frac{1}{\phi(z)} - 
		\frac{1}{\sum_{k=0}^{\mf d_p(\Re w)-1}\sum_{l=0}^{\mf d_q(\Im w)-1} 
		\phi(w)_{k,l} \Re(z-w)^k \Im(z-w)^l} +
    \end{equation}
\begin{multline}\label{fght33}
	  \frac{1}{\sum_{k=0}^{\mf d_p(\Re w)-1}\sum_{l=0}^{\mf d_q(\Im w)-1} 
		\phi(w)_{k,l} \Re(z-w)^k \Im(z-w)^l} - \\
		\sum_{k=0}^{\mf d_p(\Re w)-1}\sum_{l=0}^{\mf d_q(\Im w)-1} 
		\phi^{-1}(w)_{k,l} \Re(z-w)^k \Im(z-w)^l  
\end{multline}
    The expression in \eqref{dfbz45} can be written as
    \begin{multline*}
	\frac{1}{\phi(z)} \cdot \frac{1}{\sum_{k=0}^{\mf d_p(\Re w)-1}\sum_{l=0}^{\mf d_q(\Im w)-1} 
		\phi(w)_{k,l} \Re(z-w)^k \Im(z-w)^l} \cdot \\
		\left( \phi(z) - \sum_{k=0}^{\mf d_p(\Re w)-1}\sum_{l=0}^{\mf d_q(\Im w)-1} 
		\phi(w)_{k,l} \Re(z-w)^k \Im(z-w)^l \right)
    \end{multline*}
    Here $\frac{1}{\phi(z)}$ is bounded by assumption.
    The assumed invertibility of $\phi(w)$ means $\phi(w)_{0,0}\neq 0$. Hence, 
    \[
	\frac{1}{\sum_{k=0}^{\mf d_p(\Re w)-1}\sum_{l=0}^{\mf d_q(\Im w)-1} 
      \phi(w)_{k,l} \Re(z-w)^k \Im(z-w)^l} = O(1)
    \]
    for $z\to w$.
    From $\phi\in \mc F_N$ we then conclude that \eqref{dfbz45} is a 
    $O(\max(|\Re(z-w)|^{\mf d_p(\Re w)}, |\Im(z-w)|^{\mf d_q(\Im w)}))$ for $z\to w$.

    Because of $\phi(w)\cdot \phi^{-1}=e$ (see \thref{z30f3}), \eqref{fght33} can be rewritten as
    \begin{multline*}
    - \frac{1}{\sum_{k=0}^{\mf d_p(\Re w)-1}\sum_{l=0}^{\mf d_q(\Im w)-1} 
      \phi(w)_{k,l} \Re(z-w)^k \Im(z-w)^l}\cdot \\
      \Big(
      \sum_{k=0}^{\mf d_p(\Re w)-1}\sum_{l=0}^{\mf d_q(\Im w)-1} 
      \Re(z-w)^k \Im(z-w)^l \cdot  \sum_{c=0}^k \sum_{d=0}^l \phi(w)_{c,d} \cdot \phi^{-1}(w)_{k-c,l-d}
      \\ + O(\max(|\Re(z-w)|^{\mf d_p(\Re w)}, |\Im(z-w)|^{\mf d_q(\Im w)})) - 1\Big) = 
    \end{multline*}
    \begin{multline*}
	O(1) \cdot O(\max(|\Re(z-w)|^{\mf d_p(\Re w)}, |\Im(z-w)|^{\mf d_q(\Im w)})) = \\
	O(\max(|\Re(z-w)|^{\mf d_p(\Re w)}, |\Im(z-w)|^{\mf d_q(\Im w)})) 
    \end{multline*}
    for $z\to w$. Altogether \eqref{dfbz45pre} is a 
    $O(\max(|\Re(z-w)|^{\mf d_p(\Re w)}, |\Im(z-w)|^{\mf d_q(\Im w)}))$. Therefore, 
    $\phi^{-1}\in \mc F_N$.
\end{proof}

\section{Functional Calculus}

In this section we employ the same assumptions and notation as in the previous one.

\begin{lemma}\thlab{aufspaltb}
     For any $\phi\in \mc F_{N}$ there exists a polynomial $s\in \mathbb C[z,w]$
     and a function $g$ on $\sigma(\Theta(N))$ with values in 
     $\bb C$ on $\sigma(\Theta(N))\setminus (Z^{\bb R}_p + i Z^{\bb R}_q)$
     and values in $\bb C^2$ on $\sigma(\Theta(N))\cap (Z^{\bb R}_p + i Z^{\bb R}_q)$ such that
     $\phi - s_N\in \mc R$, such that
     $g$ is bounded and measurable on $\sigma(\Theta(N))\setminus (Z^{\bb R}_p + i Z^{\bb R}_q)$, and
     such that
     \begin{equation}\label{dbew66290}
	\phi(z) = s_N(z) + 
	(p_N + q_N) (z)\cdot g(z), \ z \in \sigma(\Theta(N)) \,,
     \end{equation}
     where the multiplication here has to be understood in the sense of \thref{durchdiv}.
\end{lemma}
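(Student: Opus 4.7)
The plan is to combine Corollary \thref{existbe} with Remark \thref{durchdiv} to produce $s$ and $g$, and then to verify the measurability and boundedness of $g$. First, Corollary \thref{existbe} delivers a polynomial $s\in\bb C[z,w]$ with $\phi - s_N \in \mc R$. Since $\phi - s_N \in \mc R$, Remark \thref{durchdiv} then yields a function $g$ on $\sigma(\Theta(N))$ (with values in $\bb C$ off $Z^{\bb R}_p + i Z^{\bb R}_q$ and in $\bb C^2$ on $\sigma(\Theta(N)) \cap (Z^{\bb R}_p + i Z^{\bb R}_q)$) such that $(\phi - s_N)(z) = (p_N + q_N)(z)\cdot g(z)$ on $\sigma(\Theta(N))$, with the multiplication interpreted as in that remark. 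This gives \eqref{dbew66290}, so what remains is to show that $g$ is Borel measurable and bounded on $\sigma(\Theta(N)) \setminus (Z^{\bb R}_p + i Z^{\bb R}_q)$.

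On this set we have $g(z) = (\phi(z) - s_N(z))/(p(\Re z) + q(\Im z))$. Measurability is immediate: the numerator is Borel measurable because $\phi$ is (by definition of $\mc F_{N}$) and $s_N$ is continuous there, while the denominator is continuous and nonzero by the final assertion of Lemma \thref{speknorm}. Since $\sigma(\Theta(N)) \cap (Z^{\bb R}_p + i Z^{\bb R}_q)$ is finite, it suffices to establish boundedness separately outside and inside a small open neighbourhood $U$ of this critical set. Outside $U$, the function $z\mapsto |p(\Re z) + q(\Im z)|$ has a positive minimum on the compact set $\sigma(\Theta(N))\setminus U$ (again by Lemma \thref{speknorm}), while the numerator is bounded there by the definition of $\mc F_N$ and continuity of the polynomial $s_N$ on the compact set $\sigma(\Theta(N))$.

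The main obstacle is bounding $g$ in a punctured neighbourhood of each $w\in\sigma(\Theta(N))\cap (Z^{\bb R}_p + i Z^{\bb R}_q)$. By Lemma \thref{gehzuF} we have $s_N\in \mc F_N$, so both $\phi$ and $s_N$ satisfy the growth estimate \eqref{fn8qw3b} at $w$. Since $\phi - s_N\in\mc R$ forces $\pi((\phi - s_N)(w)) = 0$, the coefficients $\phi(w)_{k,l}$ and $s_N(w)_{k,l}$ coincide for $0\leq k\leq \mf d_p(\Re w) - 1$ and $0\leq l\leq \mf d_q(\Im w) - 1$; hence subtracting the two estimates \eqref{fn8qw3b} yields $|\phi(z) - s_N(z)| = O(\max(|\Re(z-w)|^{\mf d_p(\Re w)}, |\Im(z-w)|^{\mf d_q(\Im w)}))$ as $z\to w$ in $\sigma(\Theta(N))$. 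For the denominator, Taylor-expanding $p$ at $\Re w$ and $q$ at $\Im w$ (whose zero-orders there are exactly $\mf d_p(\Re w)$ and $\mf d_q(\Im w)$) gives local lower bounds $|p(\Re z)|\geq c_1 |\Re(z-w)|^{\mf d_p(\Re w)}$ and $|q(\Im z)|\geq c_2|\Im(z-w)|^{\mf d_q(\Im w)}$. Combining with the inequalities $|p(\Re z)|\leq \|R_1R_1^*\|\cdot|p(\Re z) + q(\Im z)|$ and $|q(\Im z)|\leq \|R_2R_2^*\|\cdot|p(\Re z) + q(\Im z)|$ from Lemma \thref{speknorm}, we obtain a constant $c>0$ with $|p(\Re z) + q(\Im z)|\geq c\max(|\Re(z-w)|^{\mf d_p(\Re w)}, |\Im(z-w)|^{\mf d_q(\Im w)})$ for $z\in\sigma(\Theta(N))$ sufficiently close to $w$. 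Dividing the two bounds yields the desired local boundedness of $g$, completing the proof.
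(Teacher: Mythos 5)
Your proposal is correct and follows essentially the same route as the paper: decompose via Corollary \thref{existbe} and Remark \thref{durchdiv}, then bound $g$ near each $w\in\sigma(\Theta(N))\cap(Z^{\bb R}_p+iZ^{\bb R}_q)$ by combining the growth condition \eqref{fn8qw3b} (your use of Lemma \thref{gehzuF} for $s_N$ is just the paper's appeal to Taylor's theorem in \thref{taylormehrdimrem}) with the exact-order lower bounds on $|p(\Re z)|$, $|q(\Im z)|$ and the domination $\max(|p(\Re z)|,|q(\Im z)|)\leq\max(\|R_1R_1^*\|,\|R_2R_2^*\|)\,|p(\Re z)+q(\Im z)|$ from Lemma \thref{speknorm}.
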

\begin{proof}
    According to \thref{existbe} there exists an $s\in \mathbb C[z,w]$ such that
    $\phi - s_N \in \mc R$, and by \thref{durchdiv} we then find a function $g$ 
    such that \eqref{dbew66290} holds true.     
    The measurability of 
    \[
	g(z)=\frac{\phi(z) - s(\Re z,\Im z)}{p(\Re z)+q(\Im z)} \ \text{ on } \ 
	      \sigma(\Theta(N))\setminus (Z^{\bb R}_p + i Z^{\bb R}_q)
    \]
    follows from the assumption $\phi\in\mc F_{N}$; see \thref{FdefklM}.
    
    In order to show $g$'s boundedness, first recall from \thref{speknorm} that
    \[
	\max(|p(\Re z)|,|q(\Im z)|) \leq \max(\|R_1R_1^*\|,\|R_2R_2^*\|) \, |p(\Re z) + q(\Im z)| 
    \]
    for $z\in \sigma(\Theta(N))$.
    Hence, 
    \begin{multline*}
      \frac{\max(|p(\Re z)|,|q(\Im z)|)}{|p(\Re z) + q(\Im z)|} \leq \max(\|R_1R_1^*\|,\|R_2R_2^*\|), \\
      z\in \sigma(\Theta(N))\setminus (Z^{\bb R}_p + i Z^{\bb R}_q) \,.
    \end{multline*}
    As $\phi\in \mc F_{N}$ we find for each $w\in \sigma(\Theta(N))\cap (Z^{\bb R}_p + i Z^{\bb R}_q)$
    an open neighbourhood $U(w)$ of $w$ such that \eqref{fn8qw3b} is bounded for $z \in U(w)\setminus \{w\}$.
    Clearly, we can make the neighbourhoods $U(w)$ smaller so that they are pairwise disjoint.
    Since for $w\in \sigma(\Theta(N))\cap (Z^{\bb R}_p + i Z^{\bb R}_q)$ the real number $\Re w$ ($\Im w$) 
    is a zero of $p(\Re z)$ ($q(\Im z)$) with multiplicity $\mf d_p(\Re w)$ ($\mf d_q(\Im w)$), we have
    \[
	  c |\Re(z-w)|^{\mf d_p(\Re w)} \leq |p(\Re z)|, \ \
	  d |\Im(z-w)|^{\mf d_q(\Im w)} \leq |q(\Im z)| 
    \]
    for $z \in U(w)$ with constants $c, d > 0$. Hence,
    \[
    \frac{\max(|\Re(z-w)|^{\mf d_p(\Re w)},|\Im(z-w)|^{\mf d_q(\Im w)})}{\max(|p(\Re z)|,|q(\Im z)|)} \leq C_w
    \]
    on $\sigma(\Theta(N))\cap U(w)\setminus \{w\}$ for some $C_w > 0$. 
    By what was said in \thref{taylormehrdimrem} and we also have
    \begin{multline*}
		 s(\Re z,\Im z) = \sum_{k=0}^{\mf d_p(\Re w)-1}\sum_{l=0}^{\mf d_q(\Im w)-1} 
		\phi(w)_{k,l} \Re(z-w)^k \Im(z-w)^l + \\
		      O\big(\max(|\Re(z-w)|^{\mf d_p(\Re w)},|\Im(z-w)|^{\mf d_q(\Im w)})\big)
		\,,
    \end{multline*}
    because $\phi - s_N \in \mc R$ implies $\phi(w)_{k,l} = 
    \frac{1}{k!l!} \frac{\partial^{k+l} s}{\partial x^k\partial y^l}(\Re w,\Im w)$. 
    Using the boundedness of \eqref{fn8qw3b} we altogether obtain the boundedness of
    \begin{equation}\label{nurx549t6}
	g(z) = \frac{\phi(z) - s(\Re z,\Im z)}{p(\Re z)+q(\Im z)} = 
    \end{equation}
\begin{multline*}
	\frac{\max(|p(\Re z)|,|q(\Im z)|)}{p(\Re z) + q(\Im z)} \cdot \\
	\frac{\max(|\Re(z-w)|^{\mf d_p(\Re w)},|\Im(z-w)|^{\mf d_q(\Im w)})}{\max(|p(\Re z)|,|q(\Im z)|)} \cdot \\
	\frac{\phi(z) - s(\Re z,\Im z)}{\max(|\Re(z-w)|^{\mf d_p(\Re w)},|\Im(z-w)|^{\mf d_q(\Im w)})} 
\end{multline*}
    for $z\in \sigma(\Theta(N))\cap U(w)\setminus \{w\}$. 
    Since by \thref{speknorm} the function
    $\frac{1}{p(\Re z)+q(\Im z)}$ is continuous, and hence
    bounded on $\sigma(\Theta(N)) \setminus \bigcup_{w\in \sigma(\Theta(N))\cap (Z^{\bb R}_p + i Z^{\bb R}_q)} U(w)$, 
    we see that
    \eqref{nurx549t6} is even bounded for $z\in \sigma(\Theta(N)) \setminus (Z^{\bb R}_p + i Z^{\bb R}_q)$.
\end{proof}

\begin{definition}\thlab{funcaldef}
    For any $\phi\in \mc F_{N}$ we define 
\[
    \phi(N):= s(A,B) + \Xi\left( \int^{R_1,R_2}_{\sigma(\Theta(N))} g \, dE \right) \,,
\]
     where $s\in \bb C[z,w]$ and $g$ is a function on $\sigma(\Theta(N))$ with the properties mentioned in 
     \thref{aufspaltb}, and where
\begin{multline*}
    \int^{R_1,R_2}_{\sigma(\Theta(N))} g \, dE := 
    \int_{\sigma(\Theta(N))\setminus (Z^{\bb R}_p + i Z^{\bb R}_q)} g \, dE 
    \ +  \\ \hspace*{5mm}
     \sum_{w\in \sigma(\Theta(N)) \cap (Z^{\bb R}_p + i Z^{\bb R}_q)} 
	  \big(g(w)_1 R_1R_1^* E\{w\} + g(w)_2 R_2R_2^* E\{w\}\big)  \,. 
\end{multline*}
\end{definition}

First we shall show that $\phi(N)$ is well defined.

\begin{theorem}\thlab{welldef}
    Let $\phi \in \mc F_{N}$, $s, \tilde s\in \mathbb C[z,w]$ and 
    functions $g,\tilde g$ on $\sigma(\Theta(N))$ be given, such that
    the assertion of \thref{aufspaltb} holds true for $s, g$ as well as for
    $\tilde s, \tilde g$. Then
\[
    s(A,B) + \Xi\left(\int^{R_1,R_2}_{\sigma(\Theta(N))} g \, dE \right) = 
    \tilde s(A,B) + \Xi\left(\int^{R_1,R_2}_{\sigma(\Theta(N))} \tilde g \, dE \right) \,. 
\]
\end{theorem}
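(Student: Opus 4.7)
The plan is to subtract the two representations and reduce the claim to a single algebraic identity. Setting $r := s-\tilde s\in\bb C[z,w]$ and $\hat g := \tilde g-g$, it suffices to show $r(A,B) = \Xi\bigl(\int^{R_1,R_2}_{\sigma(\Theta(N))}\hat g\,dE\bigr)$. Since both $\phi-s_N$ and $\phi-\tilde s_N$ lie in the ideal $\mc R$, so does $r_N$. By \thref{einbett2pre} applied with $a=p,\,b=q$, the condition $r_N\in\mc R$ is exactly $\varpi(r)=0$, and hence $r(z,w)=p(z)u(z,w)+q(w)v(z,w)$ for some $u,v\in\bb C[z,w]$. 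Subtracting the two instances of \eqref{dbew66290} further forces $r_N = (p_N+q_N)\cdot\hat g$ on $\sigma(\Theta(N))$; a Leibniz computation using $p(\Re w)=0=q(\Im w)$ at $w\in Z^{\bb R}_p+iZ^{\bb R}_q$, combined with \thref{durchdiv}, pins down $\hat g$ explicitly: off $Z^{\bb R}_p+iZ^{\bb R}_q$ it equals $\bigl(p(\Re z)u+q(\Im z)v\bigr)/(p(\Re z)+q(\Im z))$, while on $\sigma(\Theta(N))\cap(Z^{\bb R}_p+iZ^{\bb R}_q)$ it takes the $\bb C^2$-value $\bigl(u(\Re w,\Im w),\,v(\Re w,\Im w)\bigr)$.

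Next I evaluate the decorated integral via \thref{funcaldef}. Splitting the absolutely continuous part and using multiplicativity of the spectral integral together with \thref{korvda}, the $u$-half rewrites as
\[
    \int_{\sigma(\Theta(N))\setminus(Z^{\bb R}_p+iZ^{\bb R}_q)}\!\frac{p(\Re z)\,u(\Re z,\Im z)}{p(\Re z)+q(\Im z)}\,dE(z) = R_1R_1^*\,u(\Theta(A),\Theta(B)) - \sum_{w} u(\Re w,\Im w)\,R_1R_1^*E\{w\},
\]
where the sum ranges over $w\in\sigma(\Theta(N))\cap(Z^{\bb R}_p+iZ^{\bb R}_q)$ and one invokes $u(\Theta(A),\Theta(B))E\{w\}=u(\Re w,\Im w)E\{w\}$; the analogue with $v$ and $R_2R_2^*$ is symmetric. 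The atomic subtractions cancel exactly the discrete contributions that \thref{funcaldef} adds at these very points, so the whole integral collapses to $\int^{R_1,R_2}_{\sigma(\Theta(N))}\hat g\,dE = R_1R_1^*u(\Theta(A),\Theta(B))+R_2R_2^*v(\Theta(A),\Theta(B))$.

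To conclude, commutativity of $A$ and $B$ places $u(A,B)$ in $(T_1T_1^*)'\cap(T_2T_2^*)'$ with $u(\Theta(A),\Theta(B))=\Theta(u(A,B))$. Applying $\Xi$ and invoking \eqref{zuef2} with integrand $h(z)=u(\Re z,\Im z)$ yields
\[
    \Xi\bigl(R_1R_1^*u(\Theta(A),\Theta(B))\bigr)=\Xi_1\bigl(u(\Theta_1(A),\Theta_1(B))\bigr)=\Xi_1\circ\Theta_1(u(A,B))=u(A,B)\,T_1T_1^*=p(A)\,u(A,B),
\]
and symmetrically $\Xi(R_2R_2^*v(\Theta(A),\Theta(B)))=q(B)v(A,B)$. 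Adding gives $\Xi\bigl(\int^{R_1,R_2}\hat g\,dE\bigr)=p(A)u(A,B)+q(B)v(A,B)=r(A,B)$, as required. The main technical subtlety lies in the middle step: confirming that the atomic residue produced by restricting \thref{korvda} to $\sigma(\Theta(N))\setminus(Z^{\bb R}_p+iZ^{\bb R}_q)$ is cancelled exactly by the $R_jR_j^*E\{w\}$-corrections built into the decorated integral $\int^{R_1,R_2}$ from \thref{funcaldef}—up to this bookkeeping everything else is a routine application of the $*$-homomorphism properties of $\Theta$, $\Theta_j$, $\Xi$, $\Xi_j$.
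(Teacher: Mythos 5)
Your proof is correct and follows essentially the same route as the paper: the decomposition $s-\tilde s=p\,u+q\,v$ via \thref{einbett2pre}, the identification of $\tilde g-g$ with $u,v$ by comparing the entries $(\mf d_p(\Re w),0)$ and $(0,\mf d_q(\Im w))$ (which indeed requires, as your Leibniz step implicitly does, the vanishing of all derivatives $p^{(k)}(\Re w)$, $q^{(l)}(\Im w)$ of order below $\mf d_p(\Re w)$, $\mf d_q(\Im w)$), and the transfer $\Xi(R_jR_j^*\cdot)=\Xi_j(\cdot)$ via \eqref{zuef2} together with \thref{korvda}. The only difference is organizational -- you evaluate the decorated integral of $\hat g$ and collapse it to $r(A,B)$, whereas the paper starts from $\tilde s(A,B)-s(A,B)$ and rewrites it as $\Xi$ of the decorated integral -- so the two arguments coincide up to direction.
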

\begin{proof}
  By assumption we have $\phi - s_N, \phi - \tilde s_N \in \mc R$. Subtracting
  these functions yields $\tilde s_N - s_N \in \mc R$. Using the notation of \thref{einbett2pre} this gives
  $\varpi(\tilde s - s)_{\xi,\eta} = 0$ for $(\xi,\eta) \in p^{-1}\{0\}\times q^{-1}\{0\}$.
  According to \thref{einbett2pre} we then get 
    \begin{equation}\label{opzerlsssch}
  \tilde s(z,w) - s(z,w) = p(z) u(z,w) + q(w) v(z,w)     
    \end{equation}
  for some $u(z,w), v(z,w) \in \bb C[z,w]$. 
  
  By \thref{Xidefeig} in \cite{KaPr2014} we have
  \[
      \Xi_1\big(u(\Theta_1(A),\Theta_1(B))\big) = \Xi_1\big(\Theta_1(u(A,B))\big) = p(A) u(A,B) \,,
  \]
  \[
      \Xi_2\big(v(\Theta_2(A),\Theta_2(B))\big) = \Xi_2\big(\Theta_2(v(A,B))\big) = q(B) v(A,B) \,,
  \]
  where $\Xi_j, \ j=1,2$, are as defined in \eqref{xijdef}.
  Since $u(\Theta_1(A),\Theta_1(B)) = \int u(\Re z,\Im z) \, dE_1(z)$, we get from \eqref{zuef2}
\[
      \Xi_1\big(u(\Theta_1(A),\Theta_1(B))\big) = 
      \Xi\big(R_1R_1^* \int u(\Re z,\Im z) \, dE(z)\big) \,.
\]
  Similarly, $\Xi_2\big(v(\Theta_2(A),\Theta_2(B))\big) = \Xi\big(R_2R_2^* \int v(\Re z,\Im z) \, dE(z)\big)$.
  Therefore, employing \thref{korvda} we get
\begin{multline}\label{tismis}
      \tilde s(A,B) - s(A,B) = p(A) u(A,B) + q(B) v(A,B) =
      \\ \Xi\big(R_1R_1^* \int u(\Re z,\Im z) \, dE(z) + R_2R_2^* \int v(\Re z,\Im z) \, dE(z)\big) =
\end{multline}
\begin{multline*}
      \Xi\left( \int_{\sigma(\Theta(N))\setminus (Z^{\bb R}_p + i Z^{\bb R}_q)} \hspace*{-5mm}
	    \frac{p(\Re z)u(\Re z,\Im z) + q(\Im z)v(\Re z,\Im z)}{p(\Re z)+q(\Im z)} \, dE(z) + \right. \\
		   \left. \sum_{w\in \sigma(\Theta(N)) \cap (Z^{\bb R}_p + i Z^{\bb R}_q)} \hspace*{-5mm}
			\big( u(\Re w,\Im w) R_1R_1^* E\{w\} + v(\Re w,\Im w) R_2R_2^* E\{w\}\big)
		    \right)\,.
\end{multline*}
  On the other hand, since \eqref{dbew66290} holds true for $s,g$ and $\tilde s,\tilde g$, we have
  \begin{equation}\label{kkwae}
      (\tilde s_N -s_N)(z) = (p_N + q_N) (z)\cdot (g(z)-\tilde g(z)), \ z \in z \in \sigma(\Theta(N)) \,.
  \end{equation}
  For $z\in \sigma(\Theta(N)) \setminus (Z^{\bb R}_p + i Z^{\bb R}_q)$ by \eqref{opzerlsssch} this means
\begin{multline*}
      p(\Re z) u(\Re z,\Im z) + q(\Im z) v(\Re z,\Im z) 
      = \\ \tilde s(\Re z,\Im z) - s(\Re z,\Im z) = (p(\Re z) + q(\Im z)) \cdot (g(z)-\tilde g(z)) 
\end{multline*}
  and, in turn, 
  \[
      g(z)-\tilde g(z) = \frac{p(\Re z) u(\Re z,\Im z) + q(\Im z) v(\Re z,\Im z)}{p(\Re z) + q(\Im z)} \,.
  \]
  Considering for $z\in \sigma(\Theta(N)) \cap (Z^{\bb R}_p + i Z^{\bb R}_q)$ the
  entries of \eqref{kkwae} with indices $(\mf d_p(\Re z),0)$ and $(0,\mf d_q(\Im z))$ 
  together with \eqref{opzerlsssch} we get
\begin{multline*}
      \frac{1}{\mf d_p(\Re z)!} \, p^{(\mf d_p(\Re z))}(\Re z) \, u(\Re z,\Im z) = \\
      \frac{1}{\mf d_p(\Re z)!} \, \frac{\partial^{\mf d_p(\Re z)}}{\partial x^{\mf d_p(\Re z)}}      
      (\tilde s(\Re z,\Im z) - s(\Re z,\Im z)) = \\ \frac{1}{\mf d_p(\Re z)!} \, p^{(\mf d_p(\Re z))}(\Re z) \, 
      (g(z)_1-\tilde g(z)_1)
\end{multline*}
  and
\begin{multline*}
      \frac{1}{\mf d_q(\Im z)!} \, q^{(\mf d_q(\Im z))}(\Im z) \, v(\Re z,\Im z) = \\
      \frac{1}{\mf d_q(\Im z)!} \, \frac{\partial^{\mf d_q(\Im z)}}{\partial y^{\mf d_q(\Im z)}}      
      (\tilde s(\Re z,\Im z) - s(\Re z,\Im z)) = \\ \frac{1}{\mf d_q(\Im z)!} \, q^{(\mf d_q(\Im z))}(\Im z) 
	  \, (g(z)_2-\tilde g(z)_2)
\end{multline*}
  where we employed the product rule and the fact that 
  $p^{(k)}(\Re z) = 0 = q^{(l)}(\Im z)$ for $0\leq k < \mf d_p(\Re z), \, 0\leq l < \mf d_q(\Im z)$. 
  Since $p^{(\mf d_p(\Re z))}(\Re z)$ and $q^{(\mf d_q(\Im z))}(\Im z)$ do not vanish for 
  $z\in \sigma(\Theta(N)) \cap (Z^{\bb R}_p + i Z^{\bb R}_q)$, we get 
  $u(\Re z,\Im z) = g(z)_1-\tilde g(z)_1$ and $v(\Re z,\Im z) = g(z)_2-\tilde g(z)_2$.
  Therefore, we can write \eqref{tismis} as
  \[
  \tilde s(A,B) - s(A,B) = \Xi\left( \int^{R_1,R_2}_{\sigma(\Theta(N))} \big(g-\tilde g\big) \, dE \right)\,,
  \]
  showing the asserted equality.
\end{proof}

\begin{theorem}\thlab{mimalvertr}
    The mapping $\phi\mapsto \phi(N)$ constitutes a $*$-homomorphism from 
    $\mc F_{N}$ into $\{N,N^*\}'' \ (\subseteq B(\mc K))$ with $s_N(N) = s(A,B)$ for all $s\in \bb C[z,w]$.
\end{theorem}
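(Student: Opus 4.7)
The plan is to verify the five properties: $s_N(N)=s(A,B)$, linearity, $*$-preservation, multiplicativity, and range in $\{N,N^*\}''$. For $s\in\bb C[z,w]$, the trivial decomposition $s_N=s_N+(p_N+q_N)\cdot 0$ yields $s_N(N)=s(A,B)+\Xi(0)=s(A,B)$. Linearity is immediate from the linearity of $s$ and $g$ in a decomposition together with the linearity of $\Xi$ and of $M(g):=\int^{R_1,R_2}_{\sigma(\Theta(N))}g\,dE$. For $*$-preservation, since $p,q\in\bb R[z]$, a direct pointwise check shows that $\phi^\#=(s^\#)_N+(p_N+q_N)g^\#$ where $g^\#$ is the entrywise conjugate of $g$ (the compatibility at $w\in Z^{\bb R}_p+iZ^{\bb R}_q$ uses that $(p_N+q_N)(w)$ has real entries by \thref{fuinm0}); this combined with $s(A,B)^*=s^\#(A,B)$ (as $A,B$ are commuting selfadjoint), with $\Xi(D)^*=\Xi(D^*)$ (immediate from $[TDT^*x,y]=[T^*x,D^*T^*y]_{\mc V}$), and with $M(g)^*=M(g^\#)$ (from the selfadjointness and mutual commutativity of $E\{w\}$ and $R_jR_j^*$) yields $\phi^\#(N)=\phi(N)^*$.

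The heart of the argument is multiplicativity. Given decompositions $\phi_i=(s_i)_N+(p_N+q_N)g_i$, the identity
\[
\phi_1\phi_2-(s_1s_2)_N=\phi_1\bigl(\phi_2-(s_2)_N\bigr)+\bigl(\phi_1-(s_1)_N\bigr)(s_2)_N
\]
lies in $\mc R$ since $\mc R$ is an ideal, so $(s_1s_2)_N$ is a valid polynomial part for $\phi_1\phi_2$, yielding $\phi_1\phi_2=(s_1s_2)_N+(p_N+q_N)g_{12}$ with $g_{12}$ supplied by \thref{durchdiv}. Expanding $\phi_1(N)\phi_2(N)$ by bilinearity and using the identities $s(A,B)\,T=T\,s(\Theta(A),\Theta(B))$ (from \thref{thetadefeig} in \cite{KaPr2014} together with the fact that $\Theta$ is a $*$-homomorphism) and $\Xi(D_1)\Xi(D_2)=\Xi(D_1\,T^*T\,D_2)$ (immediate from $\Xi(D)=TDT^*$), one arrives at
\begin{multline*}
\phi_1(N)\phi_2(N)=(s_1s_2)(A,B) \\ +\Xi\bigl(s_1(\Theta(A),\Theta(B))M(g_2)+M(g_1)s_2(\Theta(A),\Theta(B))+M(g_1)T^*TM(g_2)\bigr).
\end{multline*}
Since $T^*T\,E\{w\}=0$ for $w\in Z^{\bb R}_p+iZ^{\bb R}_q$ (as $T^*T=p(\Theta(A))+q(\Theta(B))$ acts as multiplication by $p(\Re w)+q(\Im w)=0$ on $\ran E\{w\}$), since $\Theta(A),\Theta(B)$ commute with $R_jR_j^*$ by \thref{comreg}, and since $\Theta(A)E\{w\}=\Re w\,E\{w\}$ and $\Theta(B)E\{w\}=\Im w\,E\{w\}$, the argument of $\Xi$ collapses to $M(\tilde g)$ where on $\sigma(\Theta(N))\setminus(Z^{\bb R}_p+iZ^{\bb R}_q)$ one has $\tilde g=s_1g_2+g_1s_2+(p+q)g_1g_2$ (evaluated at $(\Re z,\Im z)$), and $\tilde g(w)_j=s_1(\Re w,\Im w)g_2(w)_j+g_1(w)_js_2(\Re w,\Im w)$ at $w\in\sigma(\Theta(N))\cap(Z^{\bb R}_p+iZ^{\bb R}_q)$ for $j=1,2$.

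The main obstacle is then to verify that $\tilde g=g_{12}$; on the regular part this is immediate by multiplying out the decompositions, but at $w\in\sigma(\Theta(N))\cap(Z^{\bb R}_p+iZ^{\bb R}_q)$ it requires unpacking the multiplication in $\mc A_{\mf d_p(\Re w),\mf d_q(\Im w)}$ from \thref{muldefb1}. Writing $(\phi_1\phi_2)(w)_{\mf d_p(\Re w),0}=\sum_{c=0}^{\mf d_p(\Re w)}\phi_1(w)_{c,0}\phi_2(w)_{\mf d_p(\Re w)-c,0}$, applying the Leibniz rule to identify the polynomial contribution as $(s_1s_2)_N(w)_{\mf d_p(\Re w),0}$, and using the explicit form of $p_N$ from \thref{fuinm0}, the remainder is seen to equal $\tfrac{p^{(\mf d_p(\Re w))}(\Re w)}{\mf d_p(\Re w)!}\bigl(s_1(\Re w,\Im w)g_2(w)_1+g_1(w)_1s_2(\Re w,\Im w)\bigr)$, and \thref{durchdiv} then gives $g_{12}(w)_1=\tilde g(w)_1$ exactly; the index $(0,\mf d_q(\Im w))$ is handled symmetrically. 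For the range: $s(A,B)\in\{N,N^*\}''$ since $A,B\in\{N,N^*\}''$, and for any $C\in\{N,N^*\}'$ the operator $C$ commutes with $A,B$, hence with $p(A)=T_1T_1^*$ and $q(B)=T_2T_2^*$, so $C\in(T_1T_1^*)'\cap(T_2T_2^*)'$. By \thref{comreg}, $\Theta(C)$ commutes with $R_jR_j^*$ and $T^*T$, and as $\Theta$ is a $*$-homomorphism, $\Theta(C)$ commutes with $\Theta(N),\Theta(N)^*$ and hence with every $E(\Delta)$; therefore $\Theta(C)M(g)=M(g)\Theta(C)$, and the intertwining $CT=T\Theta(C)$ yields $C\Xi(M(g))=\Xi(M(g))C$, placing $\phi(N)$ in $\{N,N^*\}''$.
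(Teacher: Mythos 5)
Your proposal is correct and takes essentially the same route as the paper: each property is reduced to the well-definedness result \thref{welldef} by exhibiting an explicit decomposition $s_N+(p_N+q_N)\cdot g$ for linear combinations, adjoints and products, using the $\Xi$/$\Theta$ identities from \cite{KaPr2014}, and the final commutant argument for membership in $\{N,N^*\}''$ is the paper's as well. Your extra bookkeeping (verifying $\tilde g=g_{12}$ entrywise at the singular indices and invoking \thref{comreg} so that $\Theta(C)$ commutes with $R_jR_j^*$) only spells out steps the paper treats implicitly.
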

\begin{proof}
    $s_N(N) = s(A,B)$ for all $s\in \bb C[z,w]$ follows from \thref{welldef} because 
    we have $s_N = s_N + (p_N + q_N)(z)\cdot 0, \, z \in \sigma(\Theta(N))$.

    Assume that for $\phi,\psi \in\mc F_{N}$ we have $s,r\in \bb C[z,w]$ and functions $g,h$ on $\sigma(\Theta(N))$
    such that $\phi - s_N, \psi-r_N\in \mc R$, 
    such that $g$ and $h$ are bounded and measurable on $\sigma(\Theta(N))\setminus (Z^{\bb R}_p + i Z^{\bb R}_q)$,
    and such that \eqref{dbew66290} as well as
    \[
	\psi(z) = r_N(z) + 
	(p_N + q_N)(z)\cdot h(z), \ z \in \sigma(\Theta(N)) \,,
    \]
    hold true; see \thref{aufspaltb}. Then for $\lambda,\mu\in\bb C$ we get from \thref{bweuh30}
    \[
	(\lambda \phi + \mu \psi)(z) = (\lambda s + \mu r)_N(z) + 
	(p_N + q_N)(z)\cdot (\lambda g(z) + \mu h(z)), \ z \in \sigma(\Theta(N)) \,, 
    \]
    where 
    $\lambda \phi + \mu \psi - (\lambda s + \mu r)_N = 
    \lambda(\phi-s_N) + \mu(\psi-r_N) \in \mc R$, and where
    $\lambda g + \mu h$ is bounded and measurable on $\sigma(\Theta(N))\setminus (Z^{\bb R}_p + i Z^{\bb R}_q)$.
    Since the definition of $\phi(N)$ in \thref{funcaldef} depends linearly on $s$ and $g$,
    we conclude from \thref{welldef} that 
    \[
      (\lambda \phi + \mu \psi)(N)=\lambda \phi(N) + \mu \psi(N) \,.
    \]
    Similarly, we get $\phi^\#(z) = (s^\#)_N(z) + 
    (p_N + q_N)(z)\cdot \bar g(z), \ z \in \sigma(\Theta(N))$; see \thref{bweuh30}.
    Thereby $\phi^\# - (s^\#)_N = (\phi - s_N)^\#\in \mc R$ holds true 
    due to the fact that $\mf d_p(\xi)=\mf d_p(\bar \xi)$
    and $\mf d_q(\eta)=\mf d_q(\bar \eta)$ for all $(\xi,\eta)\in Z^i$.
    Since $\bar g$ is bounded and measurable on $\sigma(\Theta(N))\setminus (Z^{\bb R}_p + i Z^{\bb R}_q)$,
    and since 
    \[
    \phi(N)^*= s^\#(A,B) + \Xi\left(\int^{R_1,R_2}_{\sigma(\Theta(N))} \bar g \, dE \right) \,, 
    \]
    we again obtain from \thref{welldef} that $\phi^\#(N)=\phi(N)^*$.
    
    Concerning the compatibility with $\cdot$, first note that by \thref{bweuh30}
    \[
	\phi(z) \cdot \psi(z) = (s\cdot r)_N(z) + 
	(p_N + q_N)(z)\cdot \omega(z), \ z \in \sigma(\Theta(N)) \,. 
    \]
    Here we have $\omega(z) = s(z) h(z) + r(z) g(z) + g(z)h(z)( p(\Re z) + q(\Im z))$ for 
    $z\in \sigma(\Theta(N))\setminus (Z^{\bb R}_p + i Z^{\bb R}_q)$
    and $\omega(z)_j = s(z) g(z)_j + r(z) h(z)_j, \ j=1,2$ for 
    $z\in \sigma(\Theta(N))\cap (Z^{\bb R}_p + i Z^{\bb R}_q)$ because
    $a,b \in \ker \pi$ implies $a\cdot b=0$ and, in turn, 
    $(p_N + q_N)(z) \cdot (p_N + q_N)(z) = 0$ for 
    $z\in \sigma(\Theta(N))\cap (Z^{\bb R}_p + i Z^{\bb R}_q)$.
    
    On the other hand, by \thref{Xidefeig} in \cite{KaPr2014}
    we have $\Xi(D)C = \Xi(D\Theta(C))$, $C\Xi(D) = \Xi(\Theta(C)D)$, and
    $\Xi(D_1)\Xi(D_2) = \Xi(D_1D_2 T^*T)$, where $T^*T=p(A) + q(B)$.
    Hence,
    \[
	\phi(N) \ \psi(N) =  
     \]\[
	s(A,B) \, r(A,B) + \Xi\left(\int^{R_1,R_2}_{\sigma(\Theta(N))} g \, dE \right) r(A,B) +
    s(A,B) \, \Xi\left(\int^{R_1,R_2}_{\sigma(\Theta(N))} h \, dE \right) +
     \]\[
    \Xi\left(\int^{R_1,R_2}_{\sigma(\Theta(N))} g \, dE \right) \,
     \Xi\left(\int^{R_1,R_2}_{\sigma(\Theta(N))} h \, dE \right) 
     =
     \]
\begin{multline*}
     (s \cdot r)(A,B) + 
     \Xi\left(
     \int^{R_1,R_2}_{\sigma(\Theta(N))} (g\cdot r + h \cdot s) \, dE + \right.
          \\
      \left. \int_{\sigma(\Theta(N))\setminus (Z^{\bb R}_p + i Z^{\bb R}_q)} 
			  \big(p(\Re(.)) + q(\Im(.))\big) \cdot h\cdot g \, dE\right) =
\end{multline*}
    \[
	(s \cdot r)(A,B) + \Xi\left(\int^{R_1,R_2}_{\sigma(\Theta(N))} \omega \, dE\right) \,.
    \]
    Here $\omega$ is bounded and measurable on $\sigma(\Theta(N))\setminus (Z^{\bb R}_p + i Z^{\bb R}_q)$
    and, using the fact that $\mc R$ is an ideal, 
    \[
	\phi \cdot \psi - (s\cdot r)_N = (\phi-s_N)\cdot \psi + (\psi-r_N)\cdot s_N \in \mc R \,. 
    \]    
    Hence, we again obtain from 
    \thref{welldef} that $\phi(N)\cdot \psi(N)=\big(\phi\cdot\psi\big)(N)$.
    
    Finally, we shall show that $\phi(N)\in \{N,N^*\}''$.
    Clearly, $s(A,B) \in \{A,B\}''=\{N,N^*\}''$. If $C\in \{A,B\}' \subseteq \big(p(A)+q(B)\big)' = (TT^*)'$, 
    then $\Theta(C) \in \{\Theta(A),\Theta(B)\}'$ because $\Theta$ is a homomorphism. By the 
    spectral theorem for normal operators $\Theta(C)$ commutes with 
    \[
	  D:=\int^{R_1,R_2}_{\sigma(\Theta(N))} g \, dE \,.
    \]
    According to \thref{Xidefeig} in \cite{KaPr2014} we then get 
    \[
	\Xi(D) C = \Xi(D\Theta(C)) = \Xi(\Theta(C) D) = C \Xi(D) \,.
    \]
    Hence, $\Xi(D) \in \{A,B\}''=\{N,N^*\}''$, and altogether $\phi(N) \in \{A,B\}''=\{N,N^*\}''$. 
\end{proof}

\begin{remark}\thlab{rieszproj}
For $\zeta \in Z^i$ or for an isolated $\zeta \in \sigma(\Theta(N)) \cup (Z^{\bb R}_p + iZ^{\bb R}_q)$ 
we saw in \thref{fedela} that $a\delta_\zeta \in \mc F_{N}$.
If $a$ is the unite $e \in \mf C(\zeta)$ (see \thref{z30f3}), then 
$(e\delta_\zeta)\cdot (e\delta_\zeta) = (e\delta_\zeta)$ together with \thref{mimalvertr} shows that
$(e\delta_\zeta)(N)$ is a projection. It is a kind of Riesz projection corresponding to $\zeta$. 

We set $\xi:=\Re \zeta, \ \eta:=\Im \zeta$ if $\zeta \in \sigma(\Theta(N)) \cup (Z^{\bb R}_p + iZ^{\bb R}_q)$ and
$(\xi,\eta):=\zeta$ if $\zeta \in Z^i$.
For $\lambda \in \bb C\setminus \{\xi+i\eta\}$ and for $s(z,w):=z+iw-\lambda$ we then have
$s_N \cdot (e\delta_\zeta) = \big(s_N(\zeta)\big)\delta_\zeta$, where
the entry $s(\xi,\eta)$ of $s_N(\zeta)$ with index $(0,0)$ does not vanish.
By \thref{z30f3} it therefore has a multiplicative inverse $b\in \mf C(\zeta)$. We then obtain
\[
    s_N \cdot (e\delta_\zeta) \cdot (b\delta_\zeta) = e\delta_\zeta \,.
\]
From $s_N(N) = N - \lambda$ we then get that $N\vert_{\ran (e\delta_\zeta)(N)} - \lambda$
has $(b\delta_\zeta)(N)\vert_{\ran (e\delta_\zeta)(N)}$ as its inverse operator. 
Thus, $\sigma(N\vert_{\ran (e\delta_\zeta)(N)}) \subseteq \{\xi+i\eta\}$.
\end{remark}

\begin{lemma}\thlab{deth56}
    If for $\phi\in \mc F_{N}$ we have $\phi(z)= 0$ 
    for all 
    \[
      z \in \big(\sigma(\Theta(N)) \cup ((Z^{\bb R}_p +i Z^{\bb R}_q) \cap \sigma(N))\big) 
	  \dot\cup \{(\alpha,\beta)\in Z^i: \alpha+i\beta, \bar \alpha+i\bar \beta \in \sigma(N)\} \,, 
    \]
    then $\phi(N)=0$.
\end{lemma}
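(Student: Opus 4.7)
The plan is to apply \thref{aufspaltb} to decompose $\phi = s_N + (p_N+q_N)\cdot g$ on $\sigma(\Theta(N))$ with $\phi - s_N \in \mc R$, so that $\phi(N) = s(A,B) + \Xi\bigl(\int^{R_1,R_2}_{\sigma(\Theta(N))} g\,dE\bigr)$. I will split the argument into two parts: (a) showing that $\phi(N)\cdot(p(A)+q(B)) = 0$, which forces $\phi(N)$ to vanish on $\overline{\ran T}$ and to map $\mc K$ into $\mc K_0:=\ker(p(A)+q(B))$, and (b) showing that $\phi(N)$ vanishes on $\mc K_0$. Combined with the decomposition $\mc K = \overline{\ran T} + \mc K_0$, available because the non-negative Krein-selfadjoint operator $TT^* = p(A)+q(B)$ has regular kernel thanks to the definitizability structure of $A,B$, these two claims will yield $\phi(N) = 0$.

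For (a): the hypothesis $\phi\equiv 0$ on $\sigma(\Theta(N))$ gives $(p(\Re z)+q(\Im z))\,g(z) = -s(\Re z,\Im z)$ at non-critical $z\in\sigma(\Theta(N))$, and by matching entries in the decomposition at each $w\in\sigma(\Theta(N))\cap(Z^{\bb R}_p+iZ^{\bb R}_q)$ it forces $s_N(w)_{k,l}=0$ for $0\le k\le\mf d_p(\Re w)-1$ and $0\le l\le\mf d_q(\Im w)-1$; in particular $s(\Re w,\Im w)=0$. Multiplying $\phi(N)$ on the right by $TT^*$ and using $\Xi\circ\Theta(C)=(TT^*)\,C$ together with $\Xi(D_1)\Xi(D_2)=\Xi(D_1 D_2\,T^*T)$ from \thref{Xidefeig} in \cite{KaPr2014}, $\Theta(TT^*)=T^*T$ from \eqref{zuef3}, and the fact that the scalar $p(\Re z)+q(\Im z)$ annihilates every atom $E\{w\}$ at critical $w$, one collapses the resulting expression to
\[
\phi(N)\,TT^* \;=\; \Xi\Bigl(\sum_{w\in\sigma(\Theta(N))\cap(Z^{\bb R}_p+iZ^{\bb R}_q)} s(\Re w,\Im w)\,E\{w\}\Bigr) \;=\; 0.
\]
Since $\phi(N)\in\{N,N^*\}''\subseteq(TT^*)'$, also $TT^*\phi(N)=0$, which yields both conclusions of (a).

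For (b), the observation $T^*\mc K_0=0$ makes the $\Xi$-term annihilate $\mc K_0$, so the claim reduces to $s(A,B)\vert_{\mc K_0}=0$. On $\mc K_0$ both $p(A\vert_{\mc K_0})=0$ and $q(B\vert_{\mc K_0})=0$, so the commuting operators $A\vert_{\mc K_0}, B\vert_{\mc K_0}$ generate a finite-dimensional commutative algebra that is a quotient of $\bb C[z,w]/(p(z),q(w))$, and $\mc K_0$ decomposes into generalized joint eigenspaces $\mc K_{\alpha,\beta}$ at the finitely many points $(\alpha,\beta)\in p^{-1}\{0\}\times q^{-1}\{0\}$ occurring in the joint spectrum of $(A,B)\vert_{\mc K_0}$. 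The Krein selfadjointness of $A$ and $B$ forces this joint spectrum to be invariant under $(\alpha,\beta)\mapsto(\bar\alpha,\bar\beta)$, so a non-real $(\alpha,\beta)\in Z^i$ appearing in it automatically puts both $\alpha+i\beta$ and $\bar\alpha+i\bar\beta$ into $\sigma(N\vert_{\mc K_0})\subseteq\sigma(N)$. The hypothesis then supplies $\phi(\alpha+i\beta)=0$ for real pairs and $\phi(\alpha,\beta)=0$ for $Z^i$-pairs; via $\phi-s_N\in\mc R$ this translates into vanishing of $s$ together with all mixed partials up to order $(\mf d_p(\alpha)-1,\mf d_q(\beta)-1)$ at every joint-spectral point. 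A Taylor expansion on each $\mc K_{\alpha,\beta}$, where $A-\alpha$ and $B-\beta$ are nilpotent of orders bounded by $\mf d_p(\alpha)$ and $\mf d_q(\beta)$, then delivers $s(A,B)\vert_{\mc K_{\alpha,\beta}}=0$, completing (b).

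The \emph{main obstacle} is part (b), specifically establishing the Krein-conjugate symmetry of the joint spectrum of $(A,B)\vert_{\mc K_0}$ (so that the hypothesis at $Z^i$-pairs actually applies) and matching the hypothesis precisely with the vanishing orders required to kill $s(A,B)$ on each generalized joint eigenspace. A secondary technicality is the regularity of $\mc K_0$ in the Krein-space decomposition $\mc K=\overline{\ran T}+\mc K_0$, which again relies on the definitizability structure of $A$ and $B$.
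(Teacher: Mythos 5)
Your strategy founders on the decomposition you call a ``secondary technicality''. The claim that $\mc K = \overline{\ran T} + \mc K_0$ with $\mc K_0 = \ker(p(A)+q(B))$, because this kernel is ``regular thanks to the definitizability structure'', is false in general: definitizability does not prevent $\ker(p(A)+q(B))$ from being degenerate, even neutral. Take $\mc K=\bb C^2$ with Gram matrix $\left(\begin{smallmatrix}0&1\\1&0\end{smallmatrix}\right)$, $N=A=\left(\begin{smallmatrix}0&1\\0&0\end{smallmatrix}\right)$, $B=0$, $p(z)=q(z)=z$ (both definitizing). Then $\mc K_0=\ker A=\spn\{e_1\}$ is neutral, $\overline{\ran T}=\overline{\ran\,(p(A)+q(B))}=\spn\{e_1\}=\mc K_0^{[\perp]}$, and $\overline{\ran T}+\mc K_0=\spn\{e_1\}\neq\mc K$. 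So even granting your steps (a) and (b) — and (a) is indeed correct, since $\phi\equiv 0$ on $\sigma(\Theta(N))$ does give $\phi(N)\,TT^*=TT^*\phi(N)=0$ — you only obtain that $\phi(N)$ vanishes on a possibly non-dense subspace and maps into $\mc K_0$; operators with exactly these properties (and commuting with $A,B$) need not vanish, e.g.\ $\left(\begin{smallmatrix}0&c\\0&0\end{smallmatrix}\right)$ above. This is precisely why the paper argues differently: it observes that the hypothesis forces $\phi$ to be supported on the finitely many exceptional points of $(Z^{\bb R}_p+iZ^{\bb R}_q)\setminus\sigma(N)$ and of $Z^i$ with $\alpha+i\beta$ or $\bar\alpha+i\bar\beta$ in $\rho(N)$, writes $\phi$ as a sum of delta functions there, and kills each idempotent $(e\delta_\zeta)(N)$ by a spectral argument (\thref{rieszproj}): the restriction of $N$ to its range would have empty spectrum, so the range is trivial. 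That argument never needs a decomposition of $\mc K$.

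A secondary point: in (b) your appeal to ``Krein selfadjointness forces the joint spectrum of $(A,B)\vert_{\mc K_0}$ to be conjugation-invariant'' is not the right statement and is delicate because $\mc K_0$ may be degenerate, so the restrictions are not selfadjoint in any Krein sense. What you actually need is weaker and does hold: if the joint generalized eigenspace $\mc K_{\alpha,\beta}\neq\{0\}$, then $(A-\alpha)-i(B-\beta)$ is nilpotent on it, so $\alpha-i\beta\in\sigma_p(N^*)$ and hence $\bar\alpha+i\bar\beta\in\sigma(N)$ via $\sigma(N)=\overline{\sigma(N^*)}$; together with $\alpha+i\beta\in\sigma_p(N)$ this makes the hypothesis applicable at $(\alpha,\beta)$. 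So (b) is repairable on $\mc K_0$; the irreparable gap is that (a) and (b) together do not cover $\mc K$.
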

\begin{proof}
    Since any $\zeta \in (Z^{\bb R}_p +i Z^{\bb R}_q) \setminus \sigma(N)$ is isolated
    in $\sigma(\Theta(N)) \cup (Z^{\bb R}_p + iZ^{\bb R}_q)$, we saw in \thref{rieszproj} that
    for 
    \[
	\zeta \in \underbrace{\big((Z^{\bb R}_p +i Z^{\bb R}_q) \setminus \sigma(N)\big)}_{=:Z_1} \dot\cup
	\underbrace{\{(\alpha,\beta)\in Z^i: \alpha+i\beta \in \rho(N)\}}_{=:Z_2}
    \]
    the expression 
    $(e\delta_\zeta)(N)$ is a bounded projection commuting with $N$. Hence,
    $(e\delta_\zeta)(N)$ also commutes $(N - (\xi+i\eta))^{-1}$, where
    $\xi:=\Re \zeta, \ \eta:=\Im \zeta$ if $\zeta \in Z_1$ and
    $(\xi,\eta):=\zeta$ if $\zeta \in Z_2$.
    
    Consequently, $N\vert_{\ran (e\delta_\zeta)(N)}- (\xi+i\eta)$ is invertible on $\ran (e\delta_\zeta)(N)$,
    i.e.\ $\xi+i\eta \not\in \sigma(N\vert_{\ran (e\delta_\zeta)(N)})$. By
    \thref{rieszproj} we have $\sigma(N\vert_{\ran (e\delta_\zeta)(N)}) \subseteq \{\xi+i\eta\}$.
    Hence, $\sigma(N\vert_{\ran (e\delta_\zeta)(N)}) = \emptyset$, which is impossible for
    $\ran (e\delta_\zeta)(N) \neq \{0\}$. Thus, $(e\delta_\zeta)(N) = 0$. 
    
    For $(\xi,\eta)\in Z_3:=\{(\alpha,\beta)\in Z^i: \bar\alpha+i\bar\beta \in \rho(N)\}$
    we get $(\bar \xi,\bar \eta)\in Z_2$. Hence,
    \[
	0=(e\delta_{(\bar\xi,\bar\eta)})(N)^*=(e\delta_{(\xi,\eta)})(N) \,.
    \]
    By our assumption $\phi$ is supported on $Z_1\cup Z_2 \cup Z_3$. Hence, 
    \[
	\phi(N) = (\hspace{-2mm}\sum_{\zeta \in Z_1\cup Z_2 \cup Z_3} \hspace{-2mm} 
	\phi(\zeta)\delta_\zeta \hspace{+2mm})(N) = 
	\sum_{\zeta \in Z_1\cup Z_2 \cup Z_3} \phi(\zeta) (e\delta_\zeta)(N) = 0 \,. 
    \]
 \end{proof}

\begin{remark}\thlab{fhwr34}
As a consequence of \thref{deth56} for $\phi \in \mc F_{N}$ the operator $\phi(N)$ only depends on
$\phi$'s values on
  \begin{multline*}
  \sigma_N:=
\big(\sigma(\Theta(N)) \cup ((Z^{\bb R}_p +i Z^{\bb R}_q) \cap \sigma(N))\big) \dot\cup \\ \{(\alpha,\beta)\in Z^i: 
      \alpha+i\beta, \bar \alpha+i\bar \beta \in \sigma(N)\} 
  \end{multline*}
Thus, we can re-define the function class $\mc F_{N}$ for our functional calculus
so that the elements $\phi$ of $\mc F_{N}$ are functions on this set
with $\phi(z)\in \mf C(z)$
such that $z\mapsto \phi(z)$ is measurable and bounded on $\sigma(\Theta(N) \setminus (Z^{\bb R}_p +i Z^{\bb R}_q)$
and such that \eqref{fn8qw3b} is bounded locally at $w$ 
for all $w\in \sigma(\Theta(N) \cap (Z^{\bb R}_p +i Z^{\bb R}_q)$.
\end{remark}

\begin{lemma}\thlab{wannboundinv}
    If $\phi \in \mc F_{N}$ 
    is such that $\phi(z)$ is invertible in $\mf C(z)$ (see \thref{z30f3}) for all 
    $z\in \sigma_N$
    and such that 
    $0$ does not belong to the closure of 
    $\phi\big(\sigma(\Theta(N))\setminus (Z^{\bb R}_p + i Z^{\bb R}_q) \big)$, then
    $\phi(N)$ is a boundedly invertible operator on $\mc K$.
\end{lemma}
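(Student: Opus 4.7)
The plan is to reduce the statement to \thref{einduF} combined with the $*$-homomorphism property of $\phi \mapsto \phi(N)$ from \thref{mimalvertr}. The obstacle is that the current hypothesis is phrased on $\sigma_N$ (as in \thref{fhwr34}), whereas \thref{einduF} is stated for functions defined on the larger domain $\big(\sigma(\Theta(N)) \cup (Z^{\bb R}_p + iZ^{\bb R}_q)\big) \dot\cup Z^i$. So the first task is to extend $\phi$ suitably before applying \thref{einduF}.

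I would set $\tilde\phi$ equal to $\phi$ on $\sigma_N$ and equal to the multiplicative unit $e \in \mf C(z)$ at every point $z$ of the enlarged domain not belonging to $\sigma_N$. The check that $\tilde\phi \in \mc F_{N}$ is then routine: measurability and boundedness on $\sigma(\Theta(N)) \setminus (Z^{\bb R}_p + iZ^{\bb R}_q)$ are inherited from $\phi$, because $\tilde\phi$ agrees with $\phi$ on all of $\sigma(\Theta(N))$; and the growth condition \eqref{fn8qw3b} is only imposed at $w \in \sigma(\Theta(N)) \cap (Z^{\bb R}_p + iZ^{\bb R}_q)$, points which lie in $\sigma_N$, while the expression depends only on values of $\tilde\phi$ on $\sigma(\Theta(N)) \subseteq \sigma_N$. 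Hence the condition is inherited from $\phi$.

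Next, $\tilde\phi(z)$ is invertible in $\mf C(z)$ at every $z$ in the enlarged domain (by hypothesis on $\sigma_N$, and since $e$ is trivially invertible elsewhere), and since $\tilde\phi$ coincides with $\phi$ on $\sigma(\Theta(N))$ the assumption on the closure of $\phi\big(\sigma(\Theta(N)) \setminus (Z^{\bb R}_p + iZ^{\bb R}_q)\big)$ also holds for $\tilde\phi$. Thus \thref{einduF} produces $\tilde\phi^{-1} \in \mc F_{N}$. By \thref{fuinm0pre} the function $\mathds{1}_N$ is the multiplicative unit of $\mc F_{N}$, and $s_N(N) = s(A,B)$ applied to $s=1$ gives $\mathds{1}_N(N) = I$. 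The $*$-homomorphism property from \thref{mimalvertr} then yields $\tilde\phi(N) \cdot \tilde\phi^{-1}(N) = \mathds{1}_N(N) = I$ and analogously on the other side. Finally, \thref{fhwr34} gives $\tilde\phi(N) = \phi(N)$ since the two functions coincide on $\sigma_N$, so $\phi(N)$ admits $\tilde\phi^{-1}(N)$ as its bounded inverse.
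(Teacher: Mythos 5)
Your proposal is correct and follows the paper's own argument essentially verbatim: extend $\phi$ by the unit $e$ off $\sigma_N$, check the hypotheses of \thref{einduF} to get $\phi^{-1}\in\mc F_N$, and then use the multiplicativity from \thref{mimalvertr} together with $\mathds 1_N(N)=I$ from \thref{fuinm0pre}. Your explicit verification that the extension stays in $\mc F_N$ and the appeal to \thref{fhwr34} to identify $\tilde\phi(N)$ with $\phi(N)$ only spell out what the paper leaves implicit.
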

\begin{proof}
    We think of $\phi$ as a function on $\big(\sigma(\Theta(N)) \cup (Z^{\bb R}_p + i Z^{\bb R}_q)\big)\dot\cup Z^i$
    by setting $\phi(z)=e$ (see \thref{z30f3}) for all $z$ not belonging to $\sigma_N$.
    Then all assumptions of \thref{einduF} are satisfied. Hence $\phi^{-1} \in \mc F_{N}$, and we conclude from
    \thref{mimalvertr} and \thref{fuinm0pre} that
    \[
	\phi^{-1}(N) \phi(N) = \phi(N) \phi^{-1}(N) = (\phi\cdot \phi^{-1})(N) = \mathds{1}_N(N) = I \,. 
    \]
\end{proof}

\begin{corollary}\thlab{sigmaN}
    If $N$ is a definitizable normal operator on the Krein space $\mc K$, then $\sigma(N)$
    equals to
  \begin{multline}\label{specform}
    \sigma(\Theta(N)) \cup
	    ((Z^{\bb R}_p +i Z^{\bb R}_q) \cap \sigma(N))
    \cup \\ \{\alpha + i\beta :  (\alpha,\beta)\in Z^i, 
      \alpha+i\beta, \bar \alpha+i\bar \beta \in \sigma(N)\} 
  \end{multline}
\end{corollary}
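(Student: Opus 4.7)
The goal is to show two inclusions. For the $\supseteq$ direction, the latter two components of \eqref{specform} are built by intersecting with $\sigma(N)$, hence are tautologically contained in $\sigma(N)$. Only $\sigma(\Theta(N)) \subseteq \sigma(N)$ requires argument, and I would argue contrapositively: if $\lambda \in \rho(N)$, then $(N-\lambda)^{-1}$ exists in $B(\mc K)$. Because $N - \lambda$ commutes with $TT^* = p(A)+q(B)$ (both $A$ and $B$ do), so does $(N - \lambda)^{-1}$, so both lie in $(TT^*)'$. Applying the unital $*$-algebra homomorphism $\Theta$ from \eqref{thetaVdef} gives $\Theta(N-\lambda)\,\Theta((N-\lambda)^{-1}) = I$, showing that $\Theta(N) - \lambda$ is invertible in $B(\mc V)$, whence $\lambda \in \rho(\Theta(N))$.

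For the $\subseteq$ direction, pick $\lambda \in \bb C$ not contained in \eqref{specform}. The natural candidate for invoking \thref{wannboundinv} is $s_N$ with $s(z,w) := z + iw - \lambda \in \bb C[z,w]$: by \thref{gehzuF} we have $s_N \in \mc F_N$, and by \thref{mimalvertr} we have $s_N(N) = s(A,B) = A + iB - \lambda = N - \lambda$. It then remains to verify the two hypotheses of \thref{wannboundinv}. Compactness of $\sigma(\Theta(N))$ together with $\lambda \notin \sigma(\Theta(N))$ forces $0$ to lie at positive distance from the image $s_N\big(\sigma(\Theta(N))\setminus(Z^{\bb R}_p + iZ^{\bb R}_q)\big)$. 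For each $z \in \sigma_N$ (as redefined in \thref{fhwr34}) the $(0,0)$-entry of $s_N(z)$ equals $\xi + i\eta - \lambda$, where $\xi + i\eta$ is the associated point of $\bb C$ — namely $z$ itself if $z$ lies in $\sigma(\Theta(N)) \cup ((Z^{\bb R}_p + iZ^{\bb R}_q)\cap\sigma(N))$, and $\alpha + i\beta$ if $z = (\alpha,\beta) \in Z^i$ with $\alpha + i\beta,\bar\alpha + i\bar\beta \in \sigma(N)$. In all three cases this associated point lies in \eqref{specform}, whereas $\lambda$ does not, so the $(0,0)$-entry is nonzero; by \thref{z30f3} this is precisely the criterion for invertibility of $s_N(z)$ in $\mf C(z)$.

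Thus \thref{wannboundinv} delivers that $s_N(N) = N - \lambda$ is boundedly invertible, so $\lambda \in \rho(N)$, finishing the proof. The only real work is the tri-partite case analysis for invertibility of $s_N(z)$ across $\sigma_N$, and this is essentially routine because \thref{z30f3} reduces invertibility in $\mf C(z)$ to nonvanishing of a single entry which in each case is $\xi + i\eta - \lambda$. I anticipate no serious obstacle beyond this bookkeeping.
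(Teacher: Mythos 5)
Your proposal is correct and follows essentially the same route as the paper: the inclusion $\sigma(\Theta(N))\subseteq\sigma(N)$ via the unital $*$-homomorphism $\Theta$, and for the converse the polynomial $s(z,w)=z+iw-\lambda$ with $s_N(N)=N-\lambda$, checking invertibility of $s_N(z)$ in $\mf C(z)$ through the nonvanishing $(0,0)$-entry (\thref{z30f3}) and the compactness of $\sigma(\Theta(N))$ before invoking \thref{wannboundinv}. Your write-up merely makes explicit the case analysis and the $\rho(N)$-argument that the paper compresses into one line.
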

\begin{proof}
    Since $\Theta$ is a homomorphism, we have $\sigma(\Theta(N))\subseteq \sigma(N)$.
    Hence, \eqref{specform} is contained in $\sigma(N)$.
    
    For the converse, consider the polynomial $s(z,w) = z+iw - \lambda$ for a $\lambda$
    not belonging to \eqref{specform}. We conclude that for any $z\in \sigma_N$ the first entry 
    $(s_N(z))_{0,0}$ of $s_N(z)\in \mf C(z)$ does not vanish, i.e.\ is invertible in $\mf C(z)$. 
    $(s_N(\sigma(\Theta(N))))_{0,0} = \sigma(\Theta(N)) - \lambda$ being compact,
    $0$ does not belong to the closure of 
    $s_N\big(\sigma(\Theta(N))\setminus (Z^{\bb R}_p + i Z^{\bb R}_q) \big)$.
    
    Applying \thref{wannboundinv} we see that $s_N(N)=(N-\lambda)$ is invertible.
\end{proof}

\begin{corollary}
For $\phi\in \mc F_{N}$ we have 
\[
	\sigma(\phi(N)) \subseteq \overline{\phi(\sigma_N)_{0,0}} \,.
\]
\end{corollary}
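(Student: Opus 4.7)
The plan is to apply \thref{wannboundinv} to the shifted function $\psi := \phi - \lambda \mathds{1}_N$ for any $\lambda\notin \overline{\phi(\sigma_N)_{0,0}}$, and conclude that $\phi(N)-\lambda I$ is boundedly invertible. So we first verify that $\psi\in \mc F_N$: since $\mathds{1}_N\in \mc F_N$ (see \thref{fuinm0pre}, coming from the constant function via \thref{gehzuF}) and $\mc F_N$ is closed under linear combinations, this is immediate.

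Next I would check the two hypotheses of \thref{wannboundinv} for $\psi$. For invertibility in $\mf C(z)$ at each $z\in \sigma_N$, recall from \thref{z30f3} that an element of $\mf C(z)$ is invertible if and only if its $(0,0)$-entry is non-zero. The $(0,0)$-entry of $\psi(z)$ is $\phi(z)_{0,0}-\lambda$, which is non-zero precisely because $\lambda\notin \overline{\phi(\sigma_N)_{0,0}}$. For the second hypothesis, on $\sigma(\Theta(N))\setminus (Z^{\bb R}_p + iZ^{\bb R}_q)$ the function $\phi$ takes values in $\bb C$, which are exactly the $(0,0)$-components. Since this set is contained in $\sigma_N$, the set $\psi(\sigma(\Theta(N))\setminus (Z^{\bb R}_p + iZ^{\bb R}_q))$ lies in $\phi(\sigma_N)_{0,0}-\lambda$, whose closure avoids $0$ by the choice of $\lambda$.

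Now \thref{wannboundinv} gives that $\psi(N)$ is boundedly invertible on $\mc K$. Because $\phi\mapsto \phi(N)$ is a $*$-homomorphism by \thref{mimalvertr}, we have $\psi(N) = \phi(N) - \lambda\, \mathds{1}_N(N) = \phi(N) - \lambda I$, where $\mathds{1}_N(N)=I$ follows from $s_N(N)=s(A,B)$ applied to the constant polynomial $s\equiv 1$. Hence $\lambda\in \rho(\phi(N))$, and since $\lambda$ was arbitrary outside $\overline{\phi(\sigma_N)_{0,0}}$, we obtain the asserted inclusion $\sigma(\phi(N)) \subseteq \overline{\phi(\sigma_N)_{0,0}}$.

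There is no real obstacle here; the statement is essentially a direct translation of spectrum into the language of the functional calculus via \thref{wannboundinv}. The only point worth flagging is the compatibility between the two ways of viewing $\phi(z)$ on $\sigma(\Theta(N))\setminus (Z^{\bb R}_p + iZ^{\bb R}_q)$ (scalar vs.\ the $(0,0)$-component of an element of $\mc A_{0,0}=\bb C$ or $\mc B_{\cdot,\cdot}$), which is resolved by the fact that on this set $\mf C(z)=\bb C$ and the $(0,0)$-projection is the identity.
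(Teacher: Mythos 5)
Your proposal is correct and follows essentially the same route as the paper: shift by $\lambda\,\mathds{1}_N$, verify the two hypotheses of \thref{wannboundinv} via the $(0,0)$-entry criterion of \thref{z30f3} and the choice of $\lambda$ outside $\overline{\phi(\sigma_N)_{0,0}}$, and conclude $\lambda\in\rho(\phi(N))$. The extra details you spell out (that $\phi-\lambda\mathds{1}_N\in\mc F_N$ and $\mathds{1}_N(N)=I$) are implicit in the paper's argument and raise no issues.
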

\begin{proof}
For $\lambda\notin \overline{\phi(\sigma_N)_{0,0}}$ 
and any $z\in \sigma_N$ we have 
$(\phi(z)- \lambda \mathds{1}_N(z))_{0,0} = \phi(z)_{0,0} - \lambda \neq 0$.
Hence $\phi(z)- \lambda \mathds{1}_N(z)$ is invertible in $\mf C(z)$.

Moreover, $0$ does not belong to the closure of 
$\phi\big(\sigma(\Theta(N))\setminus (Z^{\bb R}_p + i Z^{\bb R}_q) \big) -\lambda =
(\phi- \lambda \mathds{1}_N)\big(\sigma(\Theta(N))\setminus (Z^{\bb R}_p + i Z^{\bb R}_q) \big)_{0,0}$.
Therefore, we can apply \thref{wannboundinv} to $\phi- \lambda \mathds{1}_N$, and get
$\lambda \in \rho(\phi(N))$.
\end{proof}

\begin{remark}\thlab{prf55o1}
For
any characteristic function $\mathds{1}_\Delta$ of a Borel subset $\Delta\subseteq \bb C$ such that
$(Z^{\bb R}_p +i Z^{\bb R}_q) \cap \sigma(N) \cap \partial_{\bb C} \Delta = \emptyset$ the function 
$(\mathds{1}_{\tau(\Delta)})_{N}$ belongs to $\mc F_N$; see \thref{feinbetef} and \thref{gehzuF}.
Since this function is idempotent and satisfies $(\mathds{1}_{\tau(\Delta)})_{N}^\# = (\mathds{1}_{\tau(\Delta)})_{N}$,
$(\mathds{1}_{\tau(\Delta)})_{N}(N)$ is a bounded and self-adjoint projection on the Krein space $\mc K$.
These projections constitute the family of spectral projections for $N$.
\end{remark}

\end{document}